

\documentclass[a4paper,12pt]{article}

\usepackage{amscd}
\usepackage{amssymb,amsfonts,amsmath,amsthm}
\usepackage{verbatim}

\usepackage{amsmath}
\usepackage{amsthm}
\usepackage{amssymb}

\usepackage{latexsym}
\usepackage{array}
\usepackage{enumerate}

\pagestyle{plain}

\setlength{\topmargin}{-45pt}
\setlength{\oddsidemargin}{0cm}
\setlength{\evensidemargin}{0cm}
\setlength{\textheight}{23.7cm}
\setlength{\textwidth}{16cm}

\numberwithin{equation}{section}


\newcommand{\CC}{\mathbb{C}}

\newcommand{\RR}{\mathbb{R}}

\newcommand{\ZZ}{\mathbb{Z}}

\newcommand{\F}{\mathcal{F}}
\newcommand{\G}{\mathcal{G}}

\renewcommand{\SS}{\mathcal{S}}

\newcommand{\PP}{{\mathbb P}}

\renewcommand{\dim}{{\rm dim}}

\newcommand{\e}{\varepsilon}
\newcommand{\Spec}{{\rm Spec}}

\newcommand{\supp}{{\rm supp}}

\newcommand{\Int}{{\rm Int}}
\newcommand{\relint}{{\rm rel.int}}

\newcommand{\Db}{{\bf D}^{b}}
\newcommand{\Dbc}{{\bf D}_{c}^{b}}

\newcommand{\tl}[1]{\widetilde{#1}}


\newtheorem{theorem}{Theorem}[section]

\newtheorem{corollary}[theorem]{Corollary}
\newtheorem{lemma}[theorem]{Lemma}
\newtheorem{proposition}[theorem]{Proposition}

\theoremstyle{definition}
\newtheorem{definition}[theorem]{Definition}
\theoremstyle{remark}
\newtheorem{remark}[theorem]{\sc Remark}
\newtheorem{example}[theorem]{\sc Example}





\title{Bifurcation values of polynomial functions 
and perverse sheaves 
\footnote{{\bf 2010 Mathematics 
Subject Classification: }14F05, 14F43, 
14M25, 32C38, 32S20}}

\author{ Kiyoshi TAKEUCHI 
\footnote{Institute of Mathematics, University  of 
Tsukuba, 1-1-1, Tennodai, 
Tsukuba, Ibaraki, 305-8571, Japan. 
E-mail: takemicro@nifty.com } }

\date{}

\sloppy

\begin{document}

\maketitle

\begin{abstract}
We characterize bifurcation values of 
polynomial functions by using the theory of 
perverse sheaves and their vanishing 
cycles. In particular, by introducing 
a method to compute the jumps of the Euler 
characteristics with compact support of 
their fibers, we confirm the conjecture of 
N{\'e}methi-Zaharia in many 
cases. 
\end{abstract}

\maketitle

\section{Introduction}\label{sec:1}

For a polynomial function $f \colon \CC^n 
\longrightarrow \CC$ it is well-known that 
there exists a finite 
subset $B \subset \CC$ 
such that the restriction
\begin{equation}
\CC^n \setminus f^{-1}(B) \longrightarrow 
\CC \setminus B
\end{equation}
of $f$ is a $C^{\infty}$ locally trivial fibration. 
We denote by $B_f$ the smallest 
subset $B \subset \CC$ satisfying this 
condition. Let ${\rm Sing} f \subset \CC^n$ 
be the set of the critical points of 
$f \colon \CC^n \longrightarrow \CC$. Then by 
the definition of $B_f$, obviously we have 
$f( {\rm Sing} f) \subset B_f$. 
The elements of $B_f$ are called
\emph{bifurcation values} of $f$. 
The determination of the bifurcation set 
$B_f \subset \CC$ is a fundamental problem and was 
studied by many mathematicians and from 
several viewpoints, e.g. \cite{Broughton}, 
\cite{C-D-T-T}, \cite{H-L}, \cite{H-N}, \cite{N-Z}, 
\cite{Nguyen}, \cite{P}, \cite{S-T-1}, 
\cite{Tibar-book} and \cite{Zaharia}. 
The essential difficulty consists in the 
fact that in general $f$ has a 
lot of singularities at infinity. 
Here we study $B_f$ via the Newton polyhedron of $f$. 
We denote by $\Gamma_{\infty}(f)$ the convex hull of 
the Newton polytope $NP(f)$ of $f$ 
and the origin in $\RR^n$. 
We call it the Newton polyhedron 
at infinity of $f$. Throughout this paper 
we assume that $\dim \Gamma_{\infty}(f)=n$. 
Recall that $f$ is said to be 
\emph{convenient} if $\Gamma_{\infty}(f)$ 
intersects the positive part 
of each coordinate axis. 
Kouchnirenko \cite{Kushnirenko} proved that if 
$f$ is convenient and non-degenerate 
at infinity (for the definition 
see Section \ref{sec:3}) then $B_f = 
f( {\rm Sing} f)$. However, 
in the non-convenient case, 
N\'emethi and Zaharia \cite{N-Z} showed 
that more bifurcation values may occur 
due to so-called ``bad faces''. Let us explain 
this phenomenon here and refer for details to 
Section \ref{sec:3}. 

\begin{definition}(\cite{T-T}) 
We say that a face $\gamma \prec \Gamma_{\infty}(f)$ 
is \emph{atypical} if $0 \in \gamma$, 
$\dim \gamma \geq 1$ and 
the cone $\sigma ( \gamma ) \subset \RR^n$ 
which corresponds it in the dual 
fan of $\Gamma_{\infty}(f)$ (for the definition 
see Section \ref{sec:3}) is not contained in the 
first quadrant $\RR^n_+ := \RR^n_{\geq 0}$ 
of $\RR^n$. 
\end{definition}
This definition is closely related to that of 
the bad faces of $NP(f-f(0))$ 
in N{\'e}methi-Zaharia \cite{N-Z}. 
See Section \ref{sec:3} for the details 
and examples. 
In this paper, we consider the case where $f$ 
is not convenient. Let $\gamma_1, \ldots, \gamma_m$ 
be the atypical faces of 
$\Gamma_{\infty}(f)$. As we see in 
Theorem \ref{BESS} below, 
in the generic case where 
$f$ is non-degenerate 
at infinity, the singularities at infinity 
of $f$ are produced 
only from $\gamma_i$. 
For $1 \leq i \leq m$ let 
$K_i= f_{\gamma_i}( 
{\rm Sing} f_{\gamma_i}) \subset \CC$ 
be the set of the critical values of the $\gamma_i$-part 
\begin{equation}
f_{\gamma_i}: T= (\CC^*)^n \longrightarrow \CC
\end{equation}
of $f$. Let us set 
\begin{equation}
K_f = f( {\rm Sing} f) \cup \{ f(0) \} \cup 
( \cup_{i=1}^m K_i). 
\end{equation}
Then N{\'e}methi-Zaharia \cite{N-Z} 
proved the following fundamental result. 

\begin{theorem}\label{BESS}
(N{\'e}methi-Zaharia \cite{N-Z}) 
Assume that $f$ is non-degenerate 
at infinity. Then we have 
$B_f \subset K_f$. 
\end{theorem}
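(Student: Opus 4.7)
The plan is to compactify $\CC^n$ by a toric variety adapted to $\Gamma_{\infty}(f)$ and apply Thom's first isotopy lemma to a stratified proper extension of $f$. Concretely, I would fix a smooth projective toric variety $X$ whose fan $\Sigma$ refines the normal fan of $\Gamma_{\infty}(f)$ and contains $\RR^n_{\geq 0}$ as a union of cones, so that $\CC^n \subset X$ is open. The rational map $X \dashrightarrow \PP^1$ extending $f$ is then resolved by successive blowups supported on $X \setminus \CC^n$, producing a proper holomorphic map $\tl{f} : \tl{X} \longrightarrow \PP^1$. One stratifies $\tl{X}$ by the toric orbits $O_\sigma$ together with the exceptional divisors of the resolution, refined into a Whitney stratification.

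By Thom's first isotopy lemma, $\tl{f}$ is a stratified locally trivial fibration over the complement of its set $\Delta(\tl{f})$ of stratified critical values; this forces $f$ itself to be a $C^\infty$ locally trivial fibration over $\CC \setminus \Delta(\tl{f})$, so $B_f \subset \Delta(\tl{f})$. The critical values coming from the open stratum $\CC^n$ are $f(\mathrm{Sing}\, f)$. For each stratum at infinity corresponding to a cone $\sigma$ with associated face $\gamma(\sigma) \prec \Gamma_{\infty}(f)$, a local toric computation gives a factorization $f = \chi^{a(\sigma)} \cdot (f_{\gamma(\sigma)} + r_\sigma)$ with $r_\sigma$ vanishing on $O_\sigma$, where $a(\sigma) = \min_{m \in \Gamma_{\infty}(f)} \langle u_\sigma, m \rangle$ for any $u_\sigma$ in the relative interior of $\sigma$.

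A case analysis on $\sigma$ then pins down $\Delta(\tl{f})$. If $a(\sigma) > 0$, the monomial factor $\chi^{a(\sigma)}$ forces $\tl{f}|_{O_\sigma} \equiv 0$; non-degeneracy of $f_{\gamma(\sigma)}$ at infinity makes $\tl{f}$ a stratified submersion near the boundary, so the only finite critical value contributed is $0 = f(0) \in K_f$. If $a(\sigma) < 0$, then $\tl{f}|_{O_\sigma} \equiv \infty$ and no finite critical value appears. If $a(\sigma) = 0$, then $0 \in \gamma(\sigma)$ and $\tl{f}|_{O_\sigma}$ essentially coincides with $f_{\gamma(\sigma)}|_{O_\sigma}$; either $\sigma \subset \RR^n_{\geq 0}$, in which case non-degeneracy at infinity again rules out critical points on the torus, or $\sigma \not\subset \RR^n_{\geq 0}$ with $\dim \gamma(\sigma) \geq 1$, which is precisely the atypical situation contributing the sets $K_i$.

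The main obstacle is the stratified transversality argument: one must show that non-degeneracy at infinity not only implies smoothness of $f_{\gamma(\sigma)}^{-1}(c) \cap T$ but also forces $\tl{f}^{-1}(c)$ to meet each orbit $O_\sigma$ transversally inside $\tl{X}$, since otherwise Thom's isotopy lemma does not apply. One must also verify that the exceptional divisors introduced by the resolution of indeterminacy inherit the submersion property from $\tl{f}|_{O_\sigma}$; this should follow by an induction on the centers of the blowups, but the combinatorial bookkeeping is the most technical part of the argument.
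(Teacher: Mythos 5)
You should first note that the paper contains no proof of Theorem \ref{BESS} to compare against: it is quoted from N\'emethi--Zaharia \cite{N-Z}, whose original argument is of a rather different nature (an analysis at infinity via the curve selection lemma and estimates governed by the Newton polyhedron, producing trivializing vector fields over values outside $K_f$), whereas your route through a toric compactification, resolution of the indeterminacy and the Thom--Mather isotopy lemma is closer to the framework used in \cite{L-S}, \cite{M-T-2}, \cite{T-T}, \cite{Zaharia} and in Section \ref{sec:3} of this paper. That strategy can in principle be carried out, but as written your proposal has two genuine gaps.

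First, the stratification you propose is too fine inside $\CC^n$. If the Whitney stratification of $\tl{X_{\Sigma}}$ refines the decomposition into all toric orbits, then the orbits lying in $\CC^n$ (the tori of the coordinate subspaces, corresponding to cones $\sigma \subset \RR^n_+$ with $\sigma \neq \{0\}$) are strata, and the stratified critical values of $\tl{f}$ then include the critical values of $f$ restricted to these coordinate tori, which in general do not lie in $K_f$: for $f(x,y)=y+x^2+x^3$ the restriction to the orbit $\{y=0\}$, $x\neq 0$, has the critical value $4/27$, while $K_f=\{0\}$. Thom's isotopy lemma then only yields $B_f \subset \Delta(\tl{f})$ with $\Delta(\tl{f}) \not\subset K_f$, which is not the statement; you must keep $\CC^n$ as a single open stratum and refine only the boundary $\tl{X_{\Sigma}} \setminus \CC^n$ (and also invoke controlled vector fields to upgrade the stratified trivialization to a $C^{\infty}$ one on the open stratum). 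Second, and more seriously, the step you defer as ``combinatorial bookkeeping'' is exactly the content of the theorem: one must prove that for $c \notin K_f$ no boundary stratum --- the horizontal orbits, the lower-dimensional orbits in their closures, and above all the strata inside the exceptional divisors created by the resolution of indeterminacy --- contributes a stratified critical value $c$. Non-degeneracy at infinity only gives smoothness of $f_{\gamma}^{-1}(0)\cap T$ for faces $\gamma$ with $0 \notin \gamma$; for faces containing $0$ the fibers $f_{\gamma}^{-1}(c)$ may well be singular (this is precisely the origin of the sets $K_i$), and showing that the critical values of $\tl{f}$ on the intermediate and exceptional strata remain inside $\{f(0)\}\cup\bigcup_i K_i$ requires a genuine argument of the type carried out in \cite{Zaharia}, \cite{M-T-2}, \cite{T-T} (compare the product structure \eqref{NIS} in the proof of Theorem \ref{SMT-1}). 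Finally, a small but real slip: with $a(\sigma)=\min_{m\in\Gamma_{\infty}(f)}\langle u_{\sigma},m\rangle$ and $0\in\Gamma_{\infty}(f)$ one always has $a(\sigma)\leq 0$, so your case $a(\sigma)>0$ never occurs as stated; the trichotomy must be formulated with respect to $NP(f)$, with the case distinction $0\in\gamma(\sigma)$ versus $0\notin\gamma(\sigma)$ doing the actual work.
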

Moreover they proved the equality $B_f=K_f$ for $n=2$ 
and conjectured its validity 
in higher dimensions. The essential 
problem is to prove the inverse inclusion 
$K_f \subset B_f$. This has been a long 
standing conjecture until now. 
Later Zaharia \cite{Zaharia} 
proved $K_f \setminus \{ f(0) \} \subset B_f$ 
for $n \geq 2$ 
under some additional assumptions. In particular, 
he assumed that $f$ has isolated singularities 
at infinity on a fixed smooth toric compactification 
of $\CC^n$. We can easily see that 
even if $f$ is non-degenerate 
at infinity this 
condition is not satisfied in general. 
See \eqref{NIS} in 
the proof of Theorem \ref{SMT-1} below. 
Namely his assumption is very strong and 
moreover depends on 
the choice of a particular 
smooth toric compactification 
of $\CC^n$. In this paper, we overcome 
this problem by introducing 
the following intrinsic definition. 
For $1 \leq i \leq m$ let 
$L_{\gamma_i} \simeq \RR^{\dim \gamma_i}$ 
be the linear subspace of $\RR^n$ spanned by 
$\gamma_i$ and set $T_{i}= 
\Spec ( \CC [L_{\gamma_i} \cap \ZZ^n]) \simeq 
( \CC^*)^{\dim \gamma_i}$. We regard 
$f_{\gamma_i}$ as a regular function on 
$T_{i}$. 

\begin{definition}\label{ISAI} 
We say that \emph{$f$ has isolated singularities 
at infinity over $b \in K_f \setminus 
[f( {\rm Sing} f) \cup \{ f(0) \} ]$} 
if for any $1 \leq i \leq m$ 
the hypersurface $f_{\gamma_i}^{-1} 
(b) \subset T_{i} 
\simeq (\CC^*)^{\dim \gamma_i}$ in $T_{i}$ 
has only isolated singular points.  
We simply say that \emph{$f$ has 
isolated singularities 
at infinity} if it is so 
over any value $b \in K_f \setminus 
[f( {\rm Sing} f) \cup \{ f(0) \} ]$.
\end{definition}
With this new definition at hand, 
by using also the 
more sophisticated machinary of vanishing 
cycle functors for constructible sheaves 
we can eventually 
work on a singular toric variety. 
Then we use the theory of perverse sheaves 
to improve Zaharia's result. 
In this way, we prove the inverse inclusion 
$K_f \setminus \{ f(0) \} \subset B_f$ and 
confirm the conjecture of \cite{N-Z} 
in many cases. In particular, for $n=3$ 
we obtain the following result. 

\begin{theorem}\label{MT-2} 
Let $f: \CC^3 \longrightarrow \CC$ be a 
non-degenerate polynomial at 
infinity such that $\dim \Gamma_{\infty}(f)=3$. 
Then, if $f$ has isolated singularities 
at infinity over $b \in K_f \setminus 
[f( {\rm Sing} f) \cup \{ f(0) \} ]$, 
we have $b \in B_f$. 
In particular, if $f$ has isolated singularities 
at infinity, we have 
$K_f \setminus \{ f(0) \} \subset B_f$.
\end{theorem}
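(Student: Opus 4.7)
The strategy is to show that the Euler characteristic with compact support $\chi_c(f^{-1}(t))$ is not locally constant at $t=b$, which by the local triviality characterization of $B_f$ forces $b\in B_f$. To make this quantitative, I would compactify $\CC^3$ by a toric variety $X_\Sigma$ whose fan $\Sigma$ is a smooth subdivision of the dual fan of $\Gamma_\infty(f)$, and after resolving indeterminacies along $D_\infty := X_\Sigma\setminus\CC^3$ produce a proper morphism $\overline{f}\colon \tl{X}\longrightarrow \PP^1$. Writing $j\colon \CC^3\hookrightarrow \tl{X}$ for the open inclusion and $\F=j_!\CC_{\CC^3}[3]$, the jump of $\chi_c$ across $b$ is (up to sign) the Euler characteristic of the stalks of the vanishing cycle complex $\varphi_{\overline{f}-b}(\F)$ along the fiber over $b$, computed in the derived category of constructible sheaves.

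Once this reduction is made, the first task is to localize the support of $\varphi_{\overline{f}-b}(\F)$. Since $f$ is non-degenerate at infinity, the restriction of $\overline{f}-b$ to every torus orbit of $D_\infty$ attached to a non-atypical face is either non-vanishing or has only transverse zeros, so the vanishing cycle sheaf is zero there; the only strata contributing to $\varphi_{\overline{f}-b}(\F)$ are those associated with the atypical faces $\gamma_1,\ldots,\gamma_m$. On the orbit determined by $\gamma_i$ the germ of $\overline{f}-b$ is, after toric coordinates, essentially a monomial times $f_{\gamma_i}-b$ on $T_i$, so the computation of the vanishing cycles on that stratum reduces to the study of the hypersurface $f_{\gamma_i}^{-1}(b)\subset T_i$ and of the normal monodromy along the coordinate directions transverse to the orbit.

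The key input is now the hypothesis that $f_{\gamma_i}^{-1}(b)$ has only isolated singular points in $T_i$. Combined with $\dim \Gamma_\infty(f)=3$, this forces each relevant stratum to carry only finitely many points where the vanishing cycle complex is non-trivial, so $\varphi_{\overline{f}-b}(\F)$ is perverse and has zero-dimensional support. At each such point the local Euler characteristic can be expressed via Milnor numbers of $f_{\gamma_i}$ at its critical points on the fiber over $b$ multiplied by a product of toric intersection data that is, by the non-degeneracy, strictly positive (this is where self-duality of the perverse sheaf $\F$ and the local Lefschetz / Morse--theoretic interpretation of $\chi(\varphi)$ will give a definite sign). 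Summing over $i$ one obtains a strictly positive total contribution, hence a non-zero jump of $\chi_c(f^{-1}(t))$ at $t=b$, and therefore $b\in B_f$. The final statement follows by applying the over-$b$ version to every $b\in K_f\setminus[f(\mathrm{Sing}\,f)\cup\{f(0)\}]$.

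The main obstacle I expect is precisely the last step: ruling out cancellations between the local contributions coming from different atypical faces (and from different critical points on a single $f_{\gamma_i}^{-1}(b)$). The definition of atypical face is tailored so that the transverse monodromy along the toric directions outside $\RR^n_+$ is non-trivial, and one should use this together with the perverse/self-dual structure of $\F$ to guarantee that all local Euler characteristic contributions share the same sign; controlling this uniformly, rather than face by face, is the delicate point where $n=3$ and $\dim\Gamma_\infty(f)=3$ are used to keep the combinatorics of the dual fan manageable.
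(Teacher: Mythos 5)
Your overall frame (jump of $\chi_c$ at $b$ computed as the Euler characteristic of vanishing cycle stalks of $\iota_!\CC_{\CC^3}$ on a resolved toric compactification, support localized to orbits attached to atypical faces) coincides with the paper's, but there is a genuine gap exactly at the step you pass over quickly. You claim that the hypothesis that $f_{\gamma_i}^{-1}(b)\subset T_i$ has only isolated singular points, together with $\dim\Gamma_\infty(f)=3$, forces $\varphi_{\overline{f}-b}(\F)$ to have zero-dimensional support on the smooth compactification. This is false in general: to make the dual fan smooth one must subdivide the two-dimensional cones $\sigma_i$ (those dual to atypical faces with $\dim\gamma_i=1$), and for an added cone $\sigma\subset\sigma_i$ with $\dim\sigma<\dim\sigma_i$ the corresponding orbit $T_\sigma$ has $\dim T_\sigma>\dim T_i$ and one finds $g^{-1}(b)\cap T_\sigma\simeq f_{\gamma_i}^{-1}(b)\times(\CC^*)^{\dim T_\sigma-\dim T_i}$; so whenever $b\in K_i$ the singular locus at infinity, and with it the support of the vanishing cycle complex, is positive-dimensional on $\tl{X}$. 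This is precisely the obstruction the paper isolates (equation \eqref{NIS}) and overcomes by a step absent from your proposal: push the vanishing cycle computation down by the proper toric morphisms $\pi_i\colon X_{\Sigma_i^{\prime}}\to X_{\Sigma_i}$ onto the (possibly singular) affine toric variety built from all faces of $\sigma_i$, using the compatibility of proper direct images with vanishing cycles (Proposition \ref{PDI}), so that one computes $\varphi_{h\circ f_i}(\F_i)$ with $\F_i=R(\pi_i)_!\CC_{\CC^3\cap X_{\Sigma_i^{\prime}}}$; it is only on this singular model that the intrinsic hypothesis of Definition \ref{ISAI} yields discrete support.

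The second weak point is the sign argument. You appeal to ``self-duality of the perverse sheaf $\F$'', but $\iota_!\CC_{\CC^3}[3]$ is not self-dual, and more importantly, after the pushforward the relevant sheaf $\F_i$ is not simply $(\iota_i)_!\CC_T$ when $\sigma_i\cap\RR_+^3\neq\{0\}$ — a situation that genuinely occurs for $n=3$ (see Example \ref{EXP}). The paper handles this by computing $\F_i$ explicitly according to how $\sigma_i$ meets $\RR_+^3$ (fibers of $\pi_i$ over points of $T_{\sigma_i}$ being $\CC^*$ or $\{x_1x_2=0\}$, giving $\chi(\F_i)=0$ or $1$ there), by using Zaharia's result when $\dim\gamma_i=2$, and by combining the perversity of constant sheaves on the toric strata closures (Lemma \ref{PL}, via Fieseler) with the concentration of vanishing cycles of perverse sheaves with discrete support to make every local contribution non-negative and at least one strictly positive. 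Your uniform ``all contributions share the same sign'' heuristic would need to be replaced by this case-by-case analysis (or an argument of comparable precision) before the conclusion $E_f(b)>0$, and hence $b\in B_f$, is justified.
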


For $n=3$ in the generic case, 
we thus confirm the conjecture of \cite{N-Z}. 
In fact, to prove Theorem \ref{MT-2} we 
show moreover that the Euler characteristics 
with compact support of 
the fibers of $f : \CC^n \longrightarrow 
\CC$ jump at the point $b$. 
The jump of Euler characteristics was 
used as a test for the bifurcation locus 
in case of ``isolated singularities at infinity'' 
(defined in various ways) 
in many other articles and from different 
points of view (see \cite{ALM-1}, \cite{ALM-2}, 
\cite{Broughton}, \cite{H-L}, 
\cite{H-N}, \cite{P}, \cite{S-T-1}, \cite{S}, 
\cite{T-2} etc.). 
To introduce our results in 
higher dimensions, we need also 
the following definition. 

\begin{definition}
We say that an atypical face 
$\gamma_i \prec \Gamma_{\infty}(f)$ 
is \emph{relatively simple} if the cone 
$\sigma_i := \sigma ( \gamma_i ) 
\subset \RR^n$ which corresponds to it in the dual 
fan of $\Gamma_{\infty}(f)$ is simplicial 
or satisfies the condition 
$\dim \sigma_i \leq 3$. 
\end{definition}
This condition implies that the constant sheaf on 
the affine toric variety associated to 
the cone $\sigma_i$ such that 
$\dim \sigma_i = n - \dim \gamma_i$ 
is perverse (up to some shift). 
If $\sigma_i$ is simplicial, then 
the affine toric variety associated to it is 
an orbifold and the perversity follows. 
If $\dim \sigma_i \leq 3$ 
we can show the corresponding 
perversity by a result of 
Fieseler \cite{F} on the intersection 
cohomology complexes of toric varieties. 
See Lemma \ref{PL} below. 
In higher dimensions, this perversity is 
essential in our proof of the 
the inverse inclusion 
$K_f \setminus \{ f(0) \} \subset B_f$. 
Note that if $\dim \gamma_i \geq n-3$ 
we have $\dim \sigma_i \leq 3$ and the 
atypical face $\gamma_i$ 
is relatively simple. In particular, if 
$n \leq 4$ this condition is always satisfied. 
Now we define a function 
$\chi_c: \CC \longrightarrow \ZZ$ on $\CC$ by 
\begin{equation}
\chi_c(t)= \sum_{j \in \ZZ} (-1)^j \dim 
H^j_c ( f^{-1}(t); \CC ) \qquad (t \in \CC ).  
\end{equation}
Let us fix a point $b \in K_f \setminus 
 [f( {\rm Sing} f) \cup \{ f(0) \} ] 
\subset \cup_{i=1}^m K_i$ 
and define the jump $E_f(b) \in \ZZ$ 
of the function $\chi_c$ at $b$ by 
\begin{equation}
E_f(b)=(-1)^{n-1} \left\{ \chi_c(b+ \e ) 
- \chi_c(b) \right\} \in \ZZ,  
\end{equation}
where $\e >0$ is sufficiently small. 
Recall that for a polytope $\Delta$ in $\RR^n$ 
its relative interior $\relint ( \Delta )$ is 
the interior of $\Delta$ in its affine 
span ${\rm Aff} ( \Delta ) 
\simeq \RR^{\dim \Delta}$ in $\RR^n$. 
Then we have the following result. 

\begin{theorem}\label{MT-1} 
Assume that $\dim \Gamma_{\infty}(f)=n$, 
$f$ is non-degenerate at infinity and 
has isolated singularities 
at infinity over $b \in K_f \setminus 
[f( {\rm Sing} f) \cup \{ f(0) \} ]$ 
and for any $1 \leq i \leq m$ such that $b \in K_i$ 
we have $\relint ( \gamma_i) 
\subset \Int ( \RR_+^n)$. 
Assume also that there exists $1 \leq i \leq m$ 
such that $b \in K_i$ and 
$\gamma_i \prec \Gamma_{\infty}(f)$ 
is relatively simple. 
Then we have $E_f(b)>0$ and hence $b \in B_f$. 
\end{theorem}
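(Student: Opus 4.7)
The plan is to compute $E_f(b)$ via vanishing cycles on a toric compactification of $\CC^n$ and to show its positivity using the perversity guaranteed by relative simplicity. First I would choose a toric compactification $X_{\Sigma}$ of $\CC^n$ whose fan $\Sigma$ refines the dual fan of $\Gamma_{\infty}(f)$ so that every cone $\sigma_i=\sigma(\gamma_i)$ corresponding to an atypical face $\gamma_i$ with $b\in K_i$ is a cone of $\Sigma$, and such that, after possibly a further blow-up along $f^{-1}(\infty)$, the polynomial $f$ becomes a regular morphism $\bar f\colon\widetilde X_{\Sigma}\longrightarrow\PP^1$. With $j\colon\CC^n\hookrightarrow\widetilde X_{\Sigma}$ the open inclusion, proper base change and the standard relation between the jump of $\chi_c$ of fibers and vanishing cycles express $E_f(b)$, up to its built-in sign $(-1)^{n-1}$, as the Euler characteristic of $\phi_{\bar f-b}(Rj_!\CC_{\CC^n})$, a constructible complex supported on the toric boundary.

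Next I would stratify the boundary by torus orbits $T_{\sigma}$ and use non-degeneracy together with the toric local model of $f$ near each stratum to show that $\phi_{\bar f-b}(Rj_!\CC_{\CC^n})$ is supported on the union of those $T_{\sigma}$ with $\sigma\supset\sigma_i$ for some atypical $\gamma_i$ satisfying $b\in K_i$. On $T_{\sigma_i}$ itself the restriction identifies, up to a shift, with the external product of $\phi_{f_{\gamma_i}-b}(\CC_{T_i})$ on $T_i$ with the constant sheaf on $T_{\sigma_i}$. The hypothesis $\relint(\gamma_i)\subset\Int(\RR_+^n)$ keeps this orbit away from the coordinate-hyperplane strata, so that the product decomposition is genuine and the closure $\overline{T_{\sigma_i}}$ in $\widetilde X_\Sigma$ is the affine toric variety $X_{\sigma_i}$. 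The isolated-singularities-at-infinity hypothesis then forces $\phi_{f_{\gamma_i}-b}(\CC_{T_i})$ to be a skyscraper sheaf on the finitely many critical points of $f_{\gamma_i}$ on its $b$-fiber, with non-negative Milnor-number stalks, strictly positive whenever $b\in K_i$.

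Finally I would aggregate these local contributions along the orbit closures. When $\gamma_i$ is relatively simple, Lemma \ref{PL} tells us that $\CC_{X_{\sigma_i}}[\dim X_{\sigma_i}]$ is a perverse sheaf, which forces the Euler characteristic of the global vanishing-cycle contribution carried by $T_{\sigma_i}$ and its closure to acquire the sign $(-1)^{n-1}$ predicted by the definition of $E_f$. Each term coming from a relatively simple face with $b\in K_i$ is therefore non-negative, and the one furnished by the hypothesis is strictly positive. Summing yields $E_f(b)>0$, whence $b\in B_f$ since $\chi_c$ is locally constant on $\CC\setminus B_f$.

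The main obstacle is precisely this sign bookkeeping on the possibly singular toric boundary: without Lemma \ref{PL} the alternating Euler characteristic defining $E_f(b)$ could suffer cancellations between contributions from different atypical strata, and it is the perversity produced by relative simplicity which converts $E_f(b)$ into a sum of non-negative local terms. A secondary technical point is pinning down the toric local model of $f$ near each $T_{\sigma_i}$ so that the external-product description of $\phi_{\bar f-b}(Rj_!\CC_{\CC^n})$ on $T_{\sigma_i}$ is rigorous; here the assumptions of non-degeneracy at infinity and $\relint(\gamma_i)\subset\Int(\RR_+^n)$ jointly ensure that the transverse direction to $T_{\sigma_i}$ is controlled by $f_{\gamma_i}$ alone, which is what makes the whole reduction work.
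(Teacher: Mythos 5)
Your overall strategy (express $E_f(b)$ as the Euler characteristic of vanishing cycles of $\iota_!\CC_{\CC^n}$ on a toric compactification, localize at the orbits attached to the atypical faces, and use perversity to control signs) is the paper's strategy, but two essential steps are missing or wrong. First, the theorem assumes relative simplicity only for \emph{one} face $\gamma_i$ with $b\in K_i$; your final aggregation ("each term coming from a relatively simple face \ldots is non-negative, \ldots summing yields $E_f(b)>0$") silently discards the contributions of the faces $\gamma_{i'}$ with $b\in K_{i'}$ that are \emph{not} relatively simple, and nothing in your argument prevents those from being negative. The paper handles them without any simplicity hypothesis: since $\relint(\gamma_{i'})\subset\Int(\RR^n_+)$, i.e.\ $\sigma_{i'}\cap\RR^n_+=\{0\}$, the relevant complex on the singular chart $X_{\Sigma_{i'}}$ is $\F_{i'}\simeq(\iota_{i'})_!\CC_T$, the extension by zero of the constant sheaf along the affine open embedding of the big torus, which is perverse up to shift; combined with the discreteness of the support of $\varphi_{h\circ f_{i'}}(\F_{i'})$ this gives concentration of the stalks in a single degree and hence $\mu_{i',j}\geq 0$ for \emph{every} such face. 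You never invoke this, and without it the sum is not a sum of non-negative terms.

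Second, your local identification along $T_{\sigma_i}$ is incorrect: the complex whose vanishing cycles compute the contribution at $p_{i,j}$ is $\iota_!\CC_{\CC^n}$ (equal to $(\iota_i)_!\CC_T$ on the chart of $\sigma_i$), not $\CC_{\overline{T_{\sigma_i}}}$, so its stalk at $p_{i,j}$ is \emph{not} a skyscraper with Milnor-number stalk of $f_{\gamma_i}$; it mixes contributions from all orbits $T_\tau$, $\{0\}\neq\tau\prec\sigma_i$ (on a smooth model these appear as the non-isolated fibers $f_{\gamma_i}^{-1}(b)\times(\CC^*)^k$ of \eqref{NIS}, which is precisely the difficulty the paper's push-forward $\pi_i$ and Proposition \ref{PDI} are designed to tame). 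Strict positivity in the paper comes from expanding $(-1)^{n-1}\chi\{\varphi_{h\circ f_i}((\iota_i)_!\CC_T)_{p_{i,j}}\}$ as a sum over \emph{all} faces $\tau\prec\sigma_i$ of terms $\mu_{i,j,\tau}$, each of which is non-negative because, by relative simplicity and Lemma \ref{PL}, every $\CC_{Y_\tau}$ is perverse up to shift and the supports are discrete (so each stalk is concentrated in one degree), and the term $\tau=\sigma_i$ is the ordinary Milnor number of $f_{\gamma_i}$ at $p_{i,j}$ in the smooth torus, hence positive. Perversity of $\CC_{X_{\sigma_i}}$ by itself does not rule out cancellation between the intermediate-orbit terms and this Milnor number; the face-by-face decomposition plus Lemma \ref{PL} for each $Y_\tau$ is the missing mechanism. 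A further (more repairable) point: by putting the singular cones $\sigma_i$ directly into the compactifying fan you lose the smooth compactification on which the cited results (elimination of indeterminacy of $f$, transversality from non-degeneracy, the support statement for the vanishing cycles) are established; the paper instead works on a smooth fan and descends to the singular charts via the proper toric morphisms $\pi_i$, which is what makes the intrinsic Definition \ref{ISAI} usable.
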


If $n=4$, all the atypical faces 
$\gamma_i$ are relatively simple 
and we obtain the following corollary. 

\begin{corollary}\label{TT-IC-1} 
Let $f: \CC^4 \longrightarrow \CC$ be a 
non-degenerate polynomial at 
infinity such that $\dim \Gamma_{\infty}(f)=4$. 
Then, if $f$ has isolated singularities 
at infinity over $b \in K_f \setminus 
[f( {\rm Sing} f) \cup \{ f(0) \} ]$ and for 
any $1 \leq i \leq m$ such that $b \in K_i$ 
the condition $\relint ( \gamma_i) 
\subset \Int ( \RR_+^4)$ is satisfied, then 
we have $E_f(b)>0$. 
In particular, if $f$ has isolated singularities 
at infinity and $\Gamma_{\infty}(f) \setminus \{ 0 \} 
\subset \Int ( \RR_+^4)$, we have 
$K_f \setminus \{ f(0) \} \subset B_f$.
\end{corollary}

For general $n \geq 2$ we have also the 
following corollary. 

\begin{corollary}\label{TT-IC-3} 
Assume that $\dim \Gamma_{\infty}(f)=n$ and 
$f$ is non-degenerate at infinity. Then, 
if moreover $f$ has isolated singularities 
at infinity, $\Gamma_{\infty}(f) \setminus \{ 0 \} 
\subset \Int ( \RR_+^n)$ and all 
the atypical faces $\gamma_i$ 
$(1 \leq i \leq m)$ are relatively 
simple, then we have 
$K_f \setminus \{ f(0) \} \subset B_f$.
\end{corollary}

Since $\gamma_i$ 
is relatively simple if $\dim \gamma_i \geq n-3$,  
Theorem \ref{MT-1} extends the 
result of Zaharia \cite{Zaharia}. 
Indeed, he assumed the much stronger condition that 
for any $1 \leq i \leq m$ such that $b \in K_i$ 
we have $\dim \gamma_i =n-1$ (which implies also 
$\relint ( \gamma_i) \subset \Int ( \RR_+^n)$). 
His assumption means that on a fixed smooth toric 
compactification of $\CC^n$ compatible with 
$\Gamma_{\infty}(f)$ the function $f$ has 
isolated singular points only on $T$-orbits 
at infinity of dimension $n-1$ over the 
point $b \in K_f \setminus 
[f( {\rm Sing} f) \cup \{ f(0) \} ]$. 
However under our weaker assumption, in 
the proof of Theorem \ref{MT-1} we encounter 
non-isolated singular points of $f$ 
at infinity on such 
a smooth compactification 
(see \eqref{NIS}). 
We overcome this difficulty 
by reducing the problem to the case of 
isolated singular points. To this end, 
we consider the direct image of the vanishing 
cycle of a constructible sheaf 
by a special morphism 
\begin{equation}
\pi : 
X= X_{\Sigma_C^{\prime}} \longrightarrow 
X_{\Sigma_C}
\end{equation}
of toric varieties. In this way, we 
can eventually work 
on the singular toric variety $X_{\Sigma_C}$ 
canonically associated to $\Gamma_{\infty}(f)$. 
This is the reason why we can employ our 
intrinsic definition in Definition \ref{ISAI}. 
Then, on $X_{\Sigma_C}$ the function $f$ has only 
isolated singular points at infinity 
(over the point $b \in K_f \setminus 
[f( {\rm Sing} f) \cup \{ f(0) \} ]$). 
Finally, to finish the proofs of Theorems 
\ref{MT-2} and \ref{MT-1}, we apply the theory of 
perverse sheaves and their vanishing 
cycles.  Here we use the perversity of 
the constant sheaf on 
the toric variety associated to 
the cone $\sigma_i$ to obtain 
the positivity $E_f(b)>0$. For the 
moment, it is not clear if we can further 
relax the assumption on $\sigma_i$ 
by using the very general formula 
for vanishing cycle sheaves in 
Massey \cite[Lemma 2.2]{M} etc. Note also that 
our condition $\relint ( \gamma_i) 
\subset \Int ( \RR_+^n)$ in Theorem \ref{MT-1} 
is equivalent to 
the one $\sigma_i \cap \RR_+^n = \{ 0 \}$. 
However in higher dimensions, there still remain  
some atypical faces for which this condition 
is not satisfied (see Example \ref{EXP} below). 
So it is desirable to relax the condition 
$\sigma_i \cap \RR_+^n = \{ 0 \}$. 
In this direction, we have only a 
partial answer in Theorem \ref{SMT-3} 
which extends Theorems \ref{MT-2} and \ref{MT-1} 
in a unified manner. We hope that we can drop 
some of the conditions in it 
in the future. 

\bigskip
\noindent{\bf Acknowledgement:} 
The author would like to 
express his hearty gratitude to Professor 
Mihai Tib\u ar for drawing our attention to this 
interesting problem. Several discussions with him were  
very useful. The author thanks him also for his 
encouragement during the preparation of 
this paper. Moreover he is very grateful to 
the referee for many valuable suggestions.

\section{Review on constructible and perverse sheaves}\label{sec:2}

In this section, we recall some results on 
constructible and perverse sheaves. 
In this paper, we essentially 
follow the terminology of 
\cite{Dimca}, \cite{H-T-T} and \cite{K-S}. 
For example, for a topological 
space $X$ we denote by $\Db(X)$ the 
derived category whose objects are 
bounded complexes of sheaves 
of $\CC_X$-modules on $X$. 
Denote by $\Dbc(X)$ the full 
subcategory of $\Db(X)$ consisting of 
constructible objects. 

\begin{definition}
Let $X$ be an algebraic variety 
over $\CC$. Then we say that a $\ZZ$-valued 
function $\psi \colon X \longrightarrow \ZZ$ 
on $X$ is \emph{constructible} if 
there exists a stratification 
$X=\bigsqcup_{\alpha} X_{\alpha}$ of $X$ such 
that $\psi |_{X_{\alpha}}$ is 
constant for any $\alpha$. We denote by 
$F_{\ZZ}(X)$ the abelian group 
of constructible functions on $X$.
\end{definition}

Let $\F \in \Dbc(X)$ be a constructible 
sheaf (complex of sheaves) on an algebraic 
variety $X$ over $\CC$. Then we can 
naturally associate to it a constructible function 
$\chi ( \F) \in F_{\ZZ} (X)$ on $X$ defined by 
\begin{equation}
\chi ( \F)(x)= \sum_{j \in \ZZ} (-1)^j \dim 
H^j( \F)_x  \qquad (x \in X). 
\end{equation} 
For a constructible function 
$\psi \colon X \longrightarrow \ZZ$, we take a 
stratification $X=\bigsqcup_{\alpha}X_{\alpha}$ 
of $X$ such that 
$\psi |_{X_{\alpha}}$ is constant 
for any $\alpha$ as above. We denote the 
Euler characteristic of 
$X_{\alpha}$ by $\chi(X_{\alpha})$. Then we set
\begin{equation}
\int_X \psi :=
\sum_{\alpha}\chi(X_{\alpha}) 
\cdot \psi (x_{\alpha}) \in \ZZ,
\end{equation}
where $x_{\alpha}$ is a reference 
point in $X_{\alpha}$. Then we can easily 
show that $\int_X \psi \in \ZZ$ does 
not depend on the choice of the 
stratification $X=\bigsqcup_{\alpha} 
X_{\alpha}$ of $X$. Hence we obtain a 
homomorphism 
\begin{equation}
\int_X \colon F_{\ZZ}(X) \longrightarrow \ZZ
\end{equation}
of abelian groups. 
For $\psi \in F_{\ZZ}(X)$, we call $\int_X \psi \in 
\ZZ$ the topological (Euler) 
integral of $\psi$ over $X$. 
More generally, to a morphism 
$f : X \longrightarrow Y$ 
of algebraic varieties over $\CC$ we can 
associate a homomorphism $\int_f : 
F_{\ZZ}(X) \longrightarrow F_{\ZZ}(Y)$ 
of abelian groups as follows. 
For $\psi \in F_{\ZZ}(X)$ we define 
$\int_f \psi \in F_{\ZZ}(Y)$ by 
\begin{equation}
\left( \int_f \psi \right) (y) 
= \int_{f^{-1}(y)} \psi \in \ZZ 
 \qquad (y \in Y). 
\end{equation} 
Then for any constructible 
sheaf $\F \in \Dbc(X)$ on $X$ 
we have the equality 
\begin{equation}
\int_f \chi ( \F )= \chi ( Rf_* ( \F )). 
\end{equation} 
Now we recall the following well-known 
property of Deligne's vanishing cycle functors. 
Let $X$ be an algebraic variety over $\CC$ and 
$f : X \longrightarrow \CC$ a non-constant 
regular function on $X$ and set 
$X_0= \{ x \in X \ | \ f(x)=0 \} \subset X$. 
Then we denote Deligne's vanishing cycle functor 
associated to $f$ by 
\begin{equation}
\varphi_f : \Dbc(X) \longrightarrow \Dbc(X_0) 
\end{equation} 
(see \cite[Section 4.2]{Dimca} and 
\cite[Section 8.6]{K-S} etc. for the details). 

\begin{proposition}\label{PDI} 
(cf. \cite[Proposition 4.2.11]{Dimca} 
and \cite[Exercise VIII.15]{K-S} etc.) 
Let $\pi : Y \longrightarrow X$ be a proper 
morphism of algebraic varieties over $\CC$ 
and $f : X \longrightarrow \CC$ a non-constant 
regular function on $X$. Set $g=f \circ \pi 
 : Y \longrightarrow \CC$, $X_0= \{ x \in X \ | \ 
f(x)=0 \}$ and $Y_0= \{ y \in Y \ | \ 
g(y)=0 \}$. Then for any $\G \in \Dbc(Y)$ 
we have an isomorphism 
\begin{equation}
\varphi_{f} ( R \pi_* \G ) \simeq 
R( \pi|_{Y_0})_* \varphi_{g} ( \G ), 
\end{equation}
where the morphism 
$\pi|_{Y_0} : Y_0 \longrightarrow X_0$ 
is induced by $\pi$. 
\end{proposition}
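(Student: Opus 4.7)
The plan is to deduce the claimed isomorphism from the proper base change theorem combined with the standard construction of vanishing cycles as the cone of the specialization morphism from restriction to nearby cycles. Recall that if $i : X_0 \hookrightarrow X$ and $j : X \setminus X_0 \hookrightarrow X$ denote the inclusions and $p : \widetilde{\CC^*} \to \CC^*$ the universal covering, then setting $\widetilde{X \setminus X_0} := (X \setminus X_0) \times_{\CC^*} \widetilde{\CC^*}$ with projection $\tilde p$, one has the nearby cycle formula $\psi_f \F \simeq i^{-1} R(j \circ \tilde p)_* (j \circ \tilde p)^{-1} \F$, and $\varphi_f \F$ is characterised by the canonical distinguished triangle
\[ i^{-1} \F \longrightarrow \psi_f \F \longrightarrow \varphi_f \F \xrightarrow{+1}, \]
whose first arrow is induced by the adjunction unit. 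The analogous construction on $Y$ for $g = f \circ \pi$ furnishes the corresponding triangle computing $\varphi_g \G$.

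Next, I would invoke proper base change. Since $\pi$ is proper and both squares
\[
\begin{array}{ccc} Y_0 & \hookrightarrow & Y \\ \downarrow & & \downarrow \\ X_0 & \hookrightarrow & X \end{array}
\quad\text{and}\quad
\begin{array}{ccc} Y \setminus Y_0 & \hookrightarrow & Y \\ \downarrow & & \downarrow \\ X \setminus X_0 & \hookrightarrow & X \end{array}
\]
are Cartesian (with vertical arrows induced by $\pi$), and the further base change of the right-hand diagram along $\widetilde{\CC^*} \to \CC^*$ is again Cartesian with proper vertical arrow, proper base change yields natural isomorphisms
\[ i^{-1} R\pi_* \G \simeq R(\pi|_{Y_0})_* i_Y^{-1} \G, \qquad \psi_f (R\pi_* \G) \simeq R(\pi|_{Y_0})_* \psi_g \G, \]
where the second isomorphism is obtained by composing proper base change for the open complement, for the universal cover, and for the closed inclusion of zero fibres.

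Applying the exact functor $R(\pi|_{Y_0})_*$ to the defining triangle for $\varphi_g \G$ on $Y_0$ and using the two natural isomorphisms above, I would obtain a distinguished triangle on $X_0$ whose first two terms coincide with those in the defining triangle for $\varphi_f(R\pi_* \G)$; comparison of the two triangles then forces an isomorphism of the third terms, which is the desired identification. The main obstacle is precisely the compatibility check in this last step: one must verify that the proper-base-change isomorphisms intertwine the canonical specialization morphism $i^{-1} \to \psi_f$ applied to $R\pi_* \G$ with the pushforward $R(\pi|_{Y_0})_*$ of its counterpart on $Y$. This reduces to a formal but slightly delicate diagram chase involving the adjunction unit for $(j \circ \tilde p)^{-1} \dashv R(j \circ \tilde p)_*$ and its compatibility with the base change natural transformation; once this is in place, the isomorphism of cones follows automatically from the axioms of a triangulated category.
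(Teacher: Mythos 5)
Your argument is correct and is essentially the standard proof of this statement: the paper itself gives no proof but simply cites \cite[Proposition 4.2.11]{Dimca} and \cite[Exercise VIII.15]{K-S}, and the argument there is exactly yours, namely proper base change applied to the defining formula $\psi_f \simeq i^{-1}R(j\circ \tilde{p})_*(j\circ \tilde{p})^{-1}$ together with a comparison of the two specialization triangles. The compatibility you flag is indeed the only point to check, and once the square relating $i^{-1}R\pi_*\G \to \psi_f(R\pi_*\G)$ with $R(\pi|_{Y_0})_*$ of the corresponding map on $Y$ is seen to commute (a formal consequence of the naturality of the base change morphisms and of the adjunction unit), axiom (TR3) plus the fact that a morphism of distinguished triangles with two isomorphisms has an isomorphism as its third component yields the claimed identification of the vanishing cycle complexes.
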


Recall that for an algebraic variety $X$ over 
$\CC$ the category ${\rm Perv}(X)$ of 
perverse sheaves on it is a full subcategory of 
$\Dbc(X)$. Here we use the convention that 
for smooth $X$ the shifted constant sheaf 
$\CC_X[ \dim X] \in \Dbc(X)$ is perverse. 
The following result is a very special case 
of \cite[Corollary 5.2.17]{Dimca}. 

\begin{lemma}\label{Lemma-1} 
Let $X$ be an algebraic variety $X$ over 
$\CC$ and $Y \subset X$ a hypersurface in it. 
Set $U=X \setminus Y$ and let $j: U 
\hookrightarrow X$ be the inclusion map. 
Then the functors 
\begin{equation}
j_!, Rj_* : \Dbc(U) \longrightarrow \Dbc(X) 
\end{equation}
preserve the perversity. 
\end{lemma}

Now for $\F \in \Dbc(X)$ 
let $\SS : X= \sqcup_{\alpha \in A} 
X_{\alpha}$ be a Whitney stratification of $X$ 
adapted to it. Then for a non-constant 
regular function 
$f : X \longrightarrow \CC$ on $X$ we define 
a subset ${\rm Sing}_{\SS}(f) \subset X$ of $X$ by 
\begin{equation}
{\rm Sing}_{\SS}(f)= \bigsqcup_{\alpha \in A} 
{\rm Sing}(f|_{X_{\alpha}}) \subset X. 
\end{equation}
We call it the stratified singular locus of 
$f$ with respect to $\SS$ (see 
\cite[Definition 4.2.7]{Dimca}). 
By the Whitney condition on $\SS$ it is 
a closed algebraic subset of $X$. By 
\cite[Propositon 4.2.8]{Dimca} we have 
\begin{equation}\label{qoo} 
{\rm supp} \varphi_f( \F ) \subset 
X_0 \cap {\rm Sing}_{\SS}(f). 
\end{equation}
Recall also that the shifted vanishing cycle functor 
\begin{equation}
^p \varphi_f ( \cdot ):= \varphi_f ( \cdot )[-1] 
: \Dbc(X) \longrightarrow \Dbc(X_0) 
\end{equation} 
preserves the perversity. Then we 
obtain the following result (see 
the proofs of \cite[Propositions 6.1.1 and 
6.1.2]{Dimca}). 

\begin{lemma}\label{Lemma-2} 
Assume that $\F$ is perverse and the dimension of 
$X_0 \cap {\rm Sing}_{\SS}(f)$ is zero. 
Then we have the concentration 
\begin{equation}
H^l  \{  ^p \varphi_f ( \F ) \}  \simeq 0 
\qquad (l \not= 0). 
\end{equation} 
\end{lemma}

\begin{proof} 
By our assumption the perverse sheaf 
$^p \varphi_f ( \F ) \in {\rm Perv}(X_0)$ is supported 
on some points in $X_0$. Then the desired 
concentration follows immediately from 
the perversity of $^p \varphi_f ( \F )$ 
(see \cite[Proposition 8.1.22]{H-T-T}). 
\end{proof}

The following lemma will be used in the 
proofs of our main theorems. 
Let $\tau$ be a strictly convex 
rational polyhedral cone in $\RR^n$ 
and $\Sigma_{\tau}$ the fan in $\RR^n$ 
formed by all its faces. Denote by 
$X_{\Sigma_{\tau}}$ the 
($n$-dimensional) toric variety 
associated to $\Sigma_{\tau}$ 
(see \cite{Fulton} and \cite{Oda} etc.). 

\begin{lemma}\label{PL} 
In the above situation, assume also that 
$\tau$ is simplicial or satisfies 
the condition 
$\dim \tau \leq 3$. Then 
the constant sheaf 
$\CC_{X_{\Sigma_{\tau}}}$ on 
$X_{\Sigma_{\tau}}$ 
is perverse (up to some shift). 
\end{lemma}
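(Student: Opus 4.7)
My plan is to split into cases and reduce each to a standard perversity fact. First, I would reduce to the case in which $\tau$ spans $\RR^n$. Since $\Sigma_\tau$ consists of all faces of $\tau$, after choosing a splitting compatible with the linear span $L_\tau$ of $\tau$ there is an isomorphism
\[
X_{\Sigma_\tau} \simeq U_\tau' \times (\CC^*)^{n-\dim\tau},
\]
where $U_\tau'$ denotes the $(\dim\tau)$-dimensional affine toric variety associated to $\tau$ inside $L_\tau$. Because $(\CC^*)^{n-\dim\tau}$ is smooth and the external tensor product of perverse sheaves is perverse, it suffices to prove that some shift of $\CC_{U_\tau'}$ is perverse, and thus I may freely assume $\dim\tau = n$.

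Next, suppose $\tau$ is simplicial. Then $U_\tau'$ is the quotient of $\CC^n$ by the action of a finite abelian subgroup of the torus, so it is an orbifold and in particular a $\QQ$-homology manifold of pure dimension $n$. Poincar\'e--Verdier self-duality over $\QQ$ then shows that $\CC_{U_\tau'}[n]$ coincides with the intersection cohomology complex of $U_\tau'$, and is therefore perverse.

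For the remaining situation $\dim\tau\le 3$ with $\tau$ not simplicial, we must have $\dim\tau=3$, since every cone of dimension at most $2$ is automatically simplicial. In this case I would invoke the theorem of Fieseler \cite{F} which asserts that for any three-dimensional affine toric variety the intersection cohomology complex agrees with the constant sheaf placed in degree $-3$. Since the intersection cohomology complex is perverse by construction, so is $\CC_{U_\tau'}[3]$.

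The main obstacle is genuinely the non-simplicial three-dimensional case: there the unique torus-fixed point is a non-orbifold singularity, and the agreement between intersection cohomology and the constant sheaf is a nontrivial combinatorial input, ultimately reflecting the triviality of the $g$-polynomial of a polytope of dimension at most two. Once Fieseler's result is accepted, the reduction to full-dimensional $\tau$ and the orbifold argument in the simplicial case are both formal.
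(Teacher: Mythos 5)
Your reduction to the full-dimensional case via $X_{\Sigma_\tau}\simeq U_\tau'\times(\CC^*)^{n-\dim\tau}$ and your treatment of the simplicial case (orbifold, hence $\QQ$-homology manifold, hence $\CC[\dim]$ perverse) are fine and agree in substance with the paper, which quotes \cite[Proposition 8.2.21]{H-T-T} for that step. The gap is in the three-dimensional non-simplicial case: Fieseler's theorem does \emph{not} say that the intersection cohomology complex of an arbitrary $3$-dimensional affine toric variety is the constant sheaf, and that statement is false. Take $\tau$ the cone over a square, so $U_\tau'\simeq\{xy=zw\}\subset\CC^4$, the affine cone over $\PP^1\times\PP^1$. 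A small resolution (blowing up one ruling) has fibre $\PP^1$ over the vertex, so ${\rm IC}_{U_\tau'}$ has stalk cohomology $\CC$ in degrees $-3$ \emph{and} $-1$ at the vertex and is not $\CC_{U_\tau'}[3]$. Equivalently, the $g$-polynomial of a polygon with $v$ vertices is $1+(v-3)t$, which is nontrivial unless $v=3$; so the combinatorial fact you appeal to ("triviality of the $g$-polynomial in dimension $\leq 2$") only holds in the simplicial case, and your non-simplicial argument collapses exactly where it is needed.

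The conclusion is nevertheless true, because perversity of the constant sheaf is a strictly weaker condition than equality with ${\rm IC}$: the support conditions are automatic, and along the closed orbit $T_\tau$, which has codimension $3$, one only needs the costalk vanishing $H^l i_\tau^!\,\CC_{X_{\Sigma_\tau}}=0$ for $l<3=\codim T_\tau$; a nonzero costalk in degree $3$ (present in the square-cone example) is allowed. This is precisely what the paper extracts from Fieseler \cite[Theorems 1.1 and 1.2]{F}: $H^0 i_\tau^{-1}R(j_\tau)_*\CC_{X_{\Sigma_\tau}\setminus T_\tau}\simeq\CC_{T_\tau}$ and $H^1 i_\tau^{-1}R(j_\tau)_*\CC_{X_{\Sigma_\tau}\setminus T_\tau}\simeq 0$, i.e.\ vanishing of the first cohomology of the punctured neighbourhood of $T_\tau$; the standard triangle $i_\tau^!\CC \to i_\tau^{-1}\CC \to i_\tau^{-1}R(j_\tau)_*\CC_{X_{\Sigma_\tau}\setminus T_\tau} \to$ then gives $H^l i_\tau^!\CC_{X_{\Sigma_\tau}}=0$ for $l<3$, while the strata attached to proper faces of $\tau$ are simplicial and are covered by your orbifold argument. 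To repair your proof, replace the claim "${\rm IC}$ agrees with the constant sheaf" by this degree $\leq 1$ link-cohomology vanishing and the resulting costalk estimate, which is the actual content of Fieseler's results that the lemma uses.
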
  

\begin{proof} 
If $\tau$ is simplicial, then $X_{\Sigma_{\tau}}$ 
is an orbifold (see \cite[page 34]{Fulton}) 
and the assertion follows from 
\cite[Proposition 8.2.21]{H-T-T}. 
It is the case when $\dim \tau \leq 2$. 
Assume that $\dim \tau =3$. Let $T_{\tau} 
\simeq ( \CC^*)^{n- \dim \tau} 
\subset X_{\Sigma_{\tau}}$ be the (minimal) 
$T$-orbit in $X_{\Sigma_{\tau}}$ associated 
to $\tau \in \Sigma_{\tau}$ and 
$i_{\tau}: T_{\tau} \hookrightarrow  
 X_{\Sigma_{\tau}}$,
$j_{\tau}: X_{\Sigma_{\tau}} \setminus  
T_{\tau} \hookrightarrow  
 X_{\Sigma_{\tau}}$ the inclusion maps. Then 
by Fiesler \cite[Theorems 1.1 and 1.2]{F} 
we obtain
\begin{equation}
H^l i_{\tau}^{-1} R (j_{\tau})_* 
\CC_{X_{\Sigma_{\tau}} \setminus  
T_{\tau}}
\simeq \begin{cases}
\CC_{T_{\tau}} 
 & ( l=0 ), 
\\
\ 0 & ( l=1 ).  
\end{cases}
\end{equation}
This implies that we have 
\begin{equation}
H^l i_{\tau}^{!} 
\CC_{ X_{\Sigma_{\tau}} }
\simeq 0 \ \ \qquad (l < 3= 
{\rm codim} T_{\tau}). 
\end{equation}
Then the assertion follows from 
\cite[Proposition 8.1.22]{H-T-T}. 
\end{proof}

\section{Some compactifications of $\CC^n$}\label{sec:3}

In this section, we recall the constructions of 
some smooth compactifications of $\CC^n$ 
in Zaharia \cite{Zaharia} and Takeuchi-Tib{\u a}r \cite{T-T}. 
Let $f(x)=\sum_{v \in \ZZ^n_+} a_vx^v$ be a 
polynomial on $\CC^n$ ($a_v\in \CC$). 

\begin{definition}\label{dfn:3-1} 
\begin{enumerate}
\item We call the convex hull of 
$\supp(f):=\{v\in \ZZ_+^n \ |\ a_v\neq 0\} \subset 
\ZZ_+^n 
\subset \RR^n_+$ in $\RR^n$ the Newton polytope 
of $f$ and denote it by $NP(f)$.
\item (see \cite{L-S} etc.) 
We call the convex hull of $\{0\}
\cup NP(f)$ in $\RR^n$ the Newton 
polyhedron at infinity of $f$ and 
denote it by $\Gamma_{\infty}(f)$. 
\end{enumerate}
\end{definition}

For an element $u \in \RR^n$ of 
(the dual vector space of) $\RR^n$ define the 
supporting face $\gamma_u \prec  \Gamma_{\infty}(f)$ 
of $u$ in $ \Gamma_{\infty}(f)$ by 
\begin{equation}
\gamma_u = \left\{ v \in \Gamma_{\infty}(f) \ | \ 
\langle u , v \rangle 
= 
\min_{w \in \Gamma_{\infty}(f)} 
\langle u ,w \rangle \right\}. 
\end{equation}
Then we introduce an equivalence relation $\sim$ on 
(the dual vector space of) $\RR^n$ by 
$u \sim u^{\prime} \Longleftrightarrow 
\gamma_u = \gamma_{u^{\prime}}$. We can easily 
see that for any face $\gamma \prec  \Gamma_{\infty}(f)$ 
of $\Gamma_{\infty}(f)$ the closure of the 
equivalence class associated to $\gamma$ in $\RR^n$ 
is an $(n- \dim \gamma )$-dimensional rational 
convex polyhedral cone $\sigma (\gamma )$ in $\RR^n$. Moreover 
the family $\{ \sigma (\gamma ) \ | \ 
\gamma \prec  \Gamma_{\infty}(f) \}$ of cones in $\RR^n$ 
thus obtained is a subdivision of $\RR^n$. 
We call it the dual subdivision of $\RR^n$ by 
$\Gamma_{\infty}(f)$. 
If $\dim \Gamma_{\infty}(f)=n$ it 
satisfies the axiom of fans (see \cite{Fulton} and 
\cite{Oda} etc.). We call it the dual fan of 
$\Gamma_{\infty}(f)$. 

We have the following two classical 
definitions due to Kouchnirenko:

\begin{definition}[\cite{Kushnirenko}]\label{dfn:tame}
Let $\partial f\colon \CC^n 
\longrightarrow \CC^n$ be the map defined by 
$\partial f(x)=(\partial_1f(x), 
\ldots, \partial_n f(x))$. Then we say that 
$f$ is \emph{tame at infinity} if the 
restriction $(\partial f)^{-1}(B(0;\e )) 
\longrightarrow B(0;\e )$ of $\partial f$ 
to a sufficiently small ball 
$B(0;\e )$ centered at the origin 
$0 \in \CC^n$ is proper.
\end{definition}

\begin{definition}[\cite{Kushnirenko}]\label{dfn:3-3}
We say that the polynomial $f(x)=\sum_{v\in \ZZ_+^n} 
a_vx^v$ ($a_v\in \CC$) is 
\emph{non-degenerate at infinity} if for any 
face $\gamma$ of $\Gamma_{\infty}(f)$ 
such that $0 \notin \gamma$ the complex 
hypersurface $\{x \in (\CC^*)^n\ |\ 
f_{\gamma}(x)=0\}$ in $(\CC^*)^n$ is 
smooth and reduced, where we defined 
the $\gamma$-part $f_{\gamma}$ of $f$ by 
$f_{\gamma}(x)=\sum_{v \in \gamma 
\cap \ZZ_+^n} a_vx^v$.
\end{definition}

Broughton showed in \cite{Broughton} 
that if $f$ is non-degenerate 
at infinity and convenient then it is tame at infinity. 
This implies that the reduced homology of 
the general fiber of $f$ is concentrated 
in dimension $n-1$. The concentration result 
was later extended to polynomial functions 
with isolated singularities with respect 
to some fiber-compactifying extension of $f$ by 
Siersma and Tib\u ar \cite{S-T-1} and by 
Tib\u ar \cite[Theorem 4.6, Corollary 4.7]{Tibar}. 
In this paper we mainly consider 
non-convenient polynomials. 

\begin{definition}(\cite{T-T}) 
We say that a face $\gamma \prec \Gamma_{\infty}(f)$ 
is \emph{atypical} if 
$0 \in \gamma$, $\dim \gamma \geq 1$ and 
the cone $\sigma ( \gamma ) \subset \RR^n$ 
which corresponds it in the dual subdivision 
of $\Gamma_{\infty}(f)$ is not contained in the 
first quadrant $\RR^n_+$ of $\RR^n$. 
\end{definition}

This definition is related to that of 
the bad faces of $NP(f-f(0))$ 
in N{\'e}methi-Zaharia \cite{N-Z} as follows. 
If $\Delta \prec NP(f-f(0))$ 
is a bad face of $NP(f-f(0))$, then 
the convex hull $\gamma$ 
of $\{ 0 \} \cup \Delta$ in $\RR^n$ is an atypical 
one of $\Gamma_{\infty}(f)$. 
Conversely, if $\gamma \prec \Gamma_{\infty}(f)$ is 
an atypical face and $\Delta = \gamma \cap NP(f-f(0))
\prec NP(f-f(0))$ satisfies the condition 
$\dim \Delta = \dim \gamma$ then $\Delta$ is a 
bad face of $NP(f-f(0))$. 

\begin{example}\label{EXP} 
Let $n=3$ and consider a non-convenient polynomial 
$f(x,y,z)$ on $\CC^3$ whose Newton polyhedron at 
infinity $\Gamma_{\infty}(f)$ is the convex hull of 
the points $(2,0,0), (2,2,0), (2,2,3) \in \RR^3_+$ 
and the origin $0=(0,0,0) \in \RR^3$. Then the line 
segment connecting the point $(2,2,0)$ 
(resp. $(2,0,0)$) 
and the origin $0 \in \RR^3$ is an atypical 
face of $\Gamma_{\infty}(f)$.  However the 
triangle whose vertices are the points 
$(2,0,0)$, $(2,2,0)$ 
and the origin $0 \in \RR^3$ is not so. 
Note that for the line segment $\gamma$ 
connecting $(2,0,0)$ and the origin 
we have $\dim \sigma ( \gamma ) 
\cap \RR_+^3 =2$. 
\end{example}

From now we recall the smooth compactifications of 
$\CC^n$ in \cite{Zaharia} and \cite{T-T} 
(for their applications to monodromies at 
infinity see \cite{E-T}, \cite{M-T-2}, 
\cite{M-T-4} and \cite{T-T}). 
Assume that the polynomial 
$f(x)=\sum_{v \in \ZZ^n_+} a_vx^v  
\in \CC[x_1,\ldots,x_n]$ is 
``non-convenient" and 
$\dim \Gamma_{\infty}(f)=n$. 
Let $\Sigma_0$ be the dual 
fan of $\Gamma_{\infty}(f)$. 
Assume also that $f$ is non-degenerate at infinity. 
We consider $\CC^n$ as a 
toric variety associated with the fan 
$\Xi$ in $\RR^n$ formed by all the 
faces of the first quadrant 
$\RR_+^n \subset \RR^n$. 
Denote by $T \simeq (\CC^*)^n$ the 
open dense torus in it. Let $\Sigma_1$ 
be a subdivision of the dual fan $\Sigma_0$ 
of $\Gamma_{\infty}(f)$ which contains 
$\Xi$ as its subfan. Then we can 
construct a smooth subdivision $\Sigma$ of 
$\Sigma_1$ without subdividing 
the cones in $\Xi$ (see e.g. 
\cite[Lemma (2.6), Chapter II, page 
99]{Oka}). 
This implies that the 
toric variety $X_{\Sigma}$ associated with 
$\Sigma$ is a smooth compactification 
of $\CC^n$. This construction of $X_{\Sigma}$ 
coincides with the one in Zaharia \cite{Zaharia}. 
Recall that $T$ acts on 
$X_{\Sigma}$ and the $T$-orbits are 
parametrized by the cones in $\Sigma$. 
For a cone $\sigma \in \Sigma$ denote 
by $T_{\sigma} \simeq (\CC^*)^{n-\dim 
\sigma}$ the corresponding $T$-orbit. 
If $\sigma^{\perp} \simeq \RR^{n-\dim 
\sigma}$ is the orthogonal 
complement of (the affine span of) $\sigma$ 
we have $T_{\sigma}= 
\Spec ( \CC [\sigma^{\perp} \cap \ZZ^n])$. 
There exist also natural affine open 
subsets $\CC^n(\sigma) \simeq \CC^n$ of 
$X_{\Sigma}$ associated to 
$n$-dimensional cones $\sigma$ in $\Sigma$ 
as follows. Let $\sigma$ be an 
$n$-dimensional (smooth) cone in $\Sigma$ and 
$\{w_1,\ldots, w_n\} \subset \ZZ^n$ the 
set of the (non-zero) 
primitive vectors on the edges of 
$\sigma$. Let $\sigma^{\circ}$ be the dual 
cone of $\sigma$. 
Then by the smoothness of $\sigma$ 
the semigroup ring $\CC [\sigma^{\circ} \cap \ZZ^n ]$ 
is isomorphic to the polynomial ring 
$\CC [y_1, \ldots, y_n]$. 
This implies that the affine open 
subset $\CC^n(\sigma):= 
\Spec ( \CC [ \sigma^{\circ} \cap \ZZ^n  ])$ 
of $X_{\Sigma}$ is isomorphic to $\CC^n_y$. 
Moreover, on $\CC^n(\sigma) \simeq 
\CC^n_y$ the function $f(x)=
\sum_{v \in \ZZ^n_+} a_vx^v$ has the 
following form:
\begin{equation}\label{FOM} 
f(y)=\sum_{v \in \ZZ_+^n}
a_{v}y_1^{\langle w_1,v \rangle}\cdots y_n^{\langle 
w_n, v \rangle} =y_1^{b_1} \cdots y_n^{b_n} 
\times f_{\sigma}(y),
\end{equation}
where we set 
\begin{equation}
b_i=\min_{v\in \Gamma_{\infty}(f)} 
\langle w_i,v \rangle \leq 0 \qquad 
(i=1,2,\ldots,n)
\end{equation}
and $f_{\sigma}(y)$ is a polynomial on 
$\CC^n(\sigma) \simeq \CC^n_y$. In 
$\CC^n(\sigma) \simeq \CC^n_y$ 
the hypersurface $Z:= \overline{f^{-1}(0)} 
\subset X_{\Sigma}$ is explicitly 
written as $\{ y \in \CC^n(\sigma) \ | \ 
f_{\sigma}(y)=0 \}$. By \eqref{FOM} we see 
that $f$ is extended to a meromorphic 
function on $\CC^n(\sigma) \simeq \CC^n_y$. 
The variety $X_{\Sigma}$ is covered by 
such affine open 
subsets. Let $\tau$ be a 
$d$-dimensional face of the $n$-dimensional cone 
$\sigma \in \Sigma$. For simplicity, 
assume that $w_1,\ldots, w_d$ generate 
$\tau$. Then in the affine chart 
$\CC^n(\sigma) \simeq \CC^n_y$ the 
$T$-orbit $T_{\tau}$ associated to 
$\tau$ is explicitly defined by
\begin{equation*}
T_{\tau}=\{(y_1,\ldots,y_n)\in 
\CC^n(\sigma)\  |\ y_1=\cdots =y_d=0,\ 
y_{d+1},\ldots, y_n\neq 0\}\simeq (\CC^*)^{n-d}.
\end{equation*}
Hence we have
\begin{equation}
 X_{\Sigma}=\bigcup_{\dim \sigma =n}
\CC^n(\sigma)=\bigsqcup_{\tau 
\in\Sigma}T_{\tau}.
\end{equation}
Now $f$ extends to a meromorphic 
function on $X_{\Sigma}$, 
which may still have points of indeterminacy. 
For simplicity we denote this 
meromorphic extension also by $f$. 
From now on, we will eliminate 
its points of indeterminacy 
by blowing up $X_{\Sigma}$ 
(see \cite[Section 3]{M-T-2} 
and \cite[Section 3]{M-T-4} for 
the details). 
For a cone $\sigma$ in $\Sigma$ by 
taking a non-zero vector $u$ in the 
relative interior $\relint(\sigma)$ of 
$\sigma$ we define a face $\gamma_{\sigma}$ 
of $\Gamma_{\infty}(f)$ by 
$\gamma_{\sigma}= \gamma_u$. 
Note that $\gamma_{\sigma}$ does 
not depend on the choice of $u \in 
\relint(\sigma)$. We call it 
the supporting face of $\sigma$ in 
$\Gamma_{\infty}(f)$. Following 
Libgober-Sperber \cite{L-S}, 
we say that a $T$-orbit 
$T_{\sigma}$ in $X_{\Sigma}$ 
(or a cone $\sigma \in \Sigma$) is at infinity 
if the supporting face $\gamma_{\sigma} \prec 
\Gamma_{\infty}(f)$ satisfies the condition 
$0 \notin \gamma_{\sigma}$. 
We can easily see that $f$ has poles on 
the union of $T$-orbits at infinity as follows. 
Let $\rho_1, \rho_2, \ldots, \rho_r$ be 
the $1$-dimensional cones 
at infinity in $\Sigma$. 
Then $T_{\rho_1},T_{\rho_2},\ldots,T_{\rho_r}$ are the 
$(n-1)$-dimensional $T$-orbits at infinity 
in $X_{\Sigma}$. For any 
$i=1,2,\ldots,r$ the toric divisor 
$D_i:=\overline{T_{\rho_i}}$ is a smooth 
hypersurface in $X_{\Sigma}$. 
Let us denote the (unique non-zero) 
primitive vector in $\rho_i \cap 
\ZZ^n$ by $u_i$. Then the order $a_i>0$ 
of the pole of $f$ along $D_i$ 
is given by
\begin{equation}
a_i=-\min_{v\in \Gamma_{\infty}(f)} 
\langle u_i,v \rangle.
\end{equation}
From this we see that the 
poles of $f$ are contained in 
the normal crossing divisor $D:= D_1 
\cup \cdots \cup D_r$. 
Moreover by the non-convenience of $f$, there 
exist some cones $\sigma \in \Sigma$ such that 
$\sigma \notin \Xi$ and $0 \in \gamma_{\sigma}$ 
i.e. $\gamma_{\sigma}$ is an atypical face of 
$\Gamma_{\infty}(f)$. For such $\sigma$ the 
function $f$ extends holomorphically to a 
neighborhood of $T_{\sigma} \subset 
X_{\Sigma} \setminus \CC^n$. For this 
reason we call them \emph{horizontal 
$T$-orbits} in $X_{\Sigma}$ (in the tame 
case where $f$ is convenient, they do not 
appear). 
Note also that by the 
non-degeneracy at infinity of $f$, for 
any non-empty subset $I \subset \{ 
1,2, \ldots, r \}$ 
the hypersurface $Z= \overline{f^{-1}(0)}$ 
in $X_{\Sigma}$ intersects $D_I:= 
\bigcap_{i \in I}D_i$ transversally 
(or the intersection is empty). At such intersection 
points, $f$ has indeterminacy. 
We can easily see that the 
meromorphic extension of $f$ 
to $X_{\Sigma}$ has points of 
indeterminacy in the subvariety 
$D \cap Z$ of $X_{\Sigma}$ of 
codimension two. 
Now, in order to eliminate the 
indeterminacy of the meromorphic function 
$f$ on $X_{\Sigma}$, we first 
consider the blow-up $\pi_1 \colon 
X_{\Sigma}^{(1)} \longrightarrow X_{\Sigma}$ 
of $X_{\Sigma}$ along the 
$(n-2)$-dimensional smooth subvariety 
$D_1\cap Z$. Then the indeterminacy of 
the pull-back $f \circ \pi_1$ of 
$f$ to $X_{\Sigma}^{(1)}$ is 
improved. If $f \circ \pi_1$ still 
has points of indeterminacy on the 
intersection of the exceptional divisor 
$E_1$ of $\pi_1$ and the proper 
transform $Z^{(1)}$ of $Z$, we construct 
the blow-up $\pi_2 \colon 
X_{\Sigma}^{(2)} \longrightarrow 
X_{\Sigma}^{(1)}$ of $X_{\Sigma}^{(1)}$ 
along $E_1 \cap Z^{(1)}$. By repeating 
this procedure $a_1$ times, we obtain 
a tower of blow-ups
\begin{equation}
X_{\Sigma}^{(a_1)} 
\underset{\pi_{a_1}}{\longrightarrow}
\cdots \cdots
\underset{\pi_2}{\longrightarrow} X_{\Sigma}^{(1)}
\underset{\pi_1}{\longrightarrow} X_{\Sigma}.
\end{equation}
For the details 
see the figures in \cite[page 420]{M-T-2}. 
Then the pull-back of $f$ to $X_{\Sigma}^{(a_1)}$ 
has no indeterminacy 
over $T_{\rho_1}$. It also extends to a holomorphic function 
on (an open dense subset of) 
the exceptional divisor of the last blow-up 
$\pi_{a_1}$. 
For this reason we call it and its 
proper transform $F_1$ in the variety 
$\tl{X_{\Sigma}}$ that we construct below 
\emph{horizontal exceptional divisors}. 
Note that for any $t \in \CC$ 
the closure of the hypersurface 
$f^{-1}(t) \subset \CC^n$ in $X_{\Sigma}^{(a_1)}$ 
intersects $F_1$ 
transversally. Moreover it does not intersect 
the other exceptional divisors.

Next we apply this construction 
to the proper transforms of $D_2$ and $Z$ in 
$X_{\Sigma}^{(a_1)}$. Then we obtain 
also a tower of blow-ups
\begin{equation}
X_{\Sigma}^{(a_1)(a_2)} \longrightarrow 
\cdots \cdots \longrightarrow 
X_{\Sigma}^{(a_1)(1)} \longrightarrow 
X_{\Sigma}^{(a_1)}
\end{equation}
and the indeterminacy of the pull-back 
of $f$ to 
$X_{\Sigma}^{(a_1)(a_2)}$ is eliminated 
over $T_{\rho_1} \sqcup T_{\rho_2}$. By applying 
the same construction to (the proper 
transforms of) $D_3, D_4,\ldots, D_r$, 
we finally obtain a proper morphism 
$\pi \colon \tl{X_{\Sigma}} 
\longrightarrow X_{\Sigma}$ such that 
$g:=f \circ \pi$ has no point of 
indeterminacy on the whole $\tl{X_{\Sigma}}$. 
Note that the smooth 
compactification $\tl{X_{\Sigma}}$ of $\CC^n$ 
thus obtained is not a toric 
variety any more. By 
constructing a blow-up $\tl{X_{\Sigma}} 
\longrightarrow X_{\Sigma}$ of $X_{\Sigma}$ 
to eliminate the indeterminacy of $f$ we 
thus obtain a commutative diagram: 
\begin{equation}
\begin{CD}
\CC^n  @>{\iota}>> \tl{X_{\Sigma}} 
\\
@V{f}VV   @VV{g}V
\\
\CC  @>>{j}>  \PP^1 
\end{CD}
\end{equation}
of holomorphic maps, 
where $\iota : \CC^n  
\hookrightarrow \tl{X_{\Sigma}}$ and 
$j : \CC  
\hookrightarrow \PP^1$ are 
the inclusion maps and 
$g$ is proper. 
On $\tl{X_{\Sigma}}$ 
we have constructed also $r$ 
(smooth) horizontal exceptional divisors 
$F_1, F_2, \ldots, F_r$. 
The other exceptional divisors in $\tl{X_{\Sigma}}$ 
are called 
\emph{intermediate exceptional divisors}. 
By our construction 
of the blow-up $\pi \colon \tl{X_{\Sigma}} 
\longrightarrow X_{\Sigma}$, 
$F_1 \cup F_2 \cup \cdots \cup F_r$ 
is a normal crossing divisor 
in $\tl{X_{\Sigma}}$ and 
for any non-empty subset 
$I \subset \{ 1,2, \ldots, r \}$ 
and $t \in \CC$ the hypersurface 
$g^{-1}(t) \subset \tl{X_{\Sigma}}$ 
intersects $F_I:= \cap_{i \in I} F_i$ 
transversally. Moreover 
$g^{-1}(t)$ does not intersect 
intermediate exceptional divisors. 
For a point $b \in \CC$ define a 
function $h: \CC \longrightarrow 
\CC$ on $\CC$ by $h(t)=t-b$ so that 
we have $h^{-1}(0)= \{ b \}$. Then by the 
above-mentioned 
property of $F_i$ and \eqref{qoo} 
the support of 
the constructible sheaf 
$\varphi_{h \circ g}( \iota_! 
\CC_{\CC^n})$ does not intersect 
the union of the exceptional divisors 
in $\pi \colon \tl{X_{\Sigma}} 
\longrightarrow X_{\Sigma}$. 
Moreover, for the pole divisor $D
= D_1 \cup \cdots \cup D_r 
\subset X_{\Sigma}$ of (the 
meromorphic extension of) $f$ to 
$X_{\Sigma}$, the support does not intersect 
$\pi^{-1}(D)$.

\section{Bifurcation sets of 
polynomial functions}\label{sec:4}

In this section we study the bifurcation 
values of polynomial functions. 
Let $f: \CC^n \longrightarrow \CC$ be a 
polynomial function. Throughout this section 
we assume that $f$ is non-degenerate 
at infinity and $\dim \Gamma_{\infty}(f)=n$. 
Let $\Sigma_0$ be the dual 
fan of $\Gamma_{\infty}(f)$. 
Let $\gamma_1, \ldots, \gamma_m$ be the atypical faces of 
$\Gamma_{\infty}(f)$. 
For $1 \leq i \leq m$ let $K_i \subset \CC$ 
be the set of the critical values of the $\gamma_i$-part 
\begin{equation}
f_{\gamma_i}: T= (\CC^*)^n \longrightarrow \CC
\end{equation}
of $f$. We denote by 
${\rm Sing} f \subset \CC^n$ the set of the 
critical points of 
$f \colon \CC^n \longrightarrow \CC$ and set 
\begin{equation}
K_f = f( {\rm Sing} f) \cup \{ f(0) \} \cup 
( \cup_{i=1}^m K_i). 
\end{equation}
Then the following result was obtained 
by N{\'e}methi-Zaharia \cite{N-Z}. 

\begin{theorem}\label{BES}
(N{\'e}methi-Zaharia \cite{N-Z}) 
In the situation above, we have 
$B_f \subset K_f$. 
\end{theorem}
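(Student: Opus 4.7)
The plan is to compactify $\CC^n$ by a smooth toric variety adapted to $\Gamma_{\infty}(f)$ and then use non-degeneracy at infinity to control the behaviour of $f$ on each boundary stratum. First I would pick a smooth projective refinement $\Sigma$ of the dual fan $\Sigma_0$ which is still compatible with the orthant $\RR^n_+$, so that $\CC^n$ sits as an open subset of $X_{\Sigma}$. The function $f$ then extends to a meromorphic map $\bar{f}\colon X_{\Sigma} \dashrightarrow \PP^1$; after resolving its indeterminacies by further toric blow-ups, I may assume $\bar{f}\colon \tl{X} \longrightarrow \PP^1$ is a proper morphism extending $f$.

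Fix $c \in \CC \setminus K_f$. By Thom's first isotopy lemma, to prove $c \notin B_f$ it is enough to construct a Whitney stratification of $\tl{X}$ refining both the toric orbit decomposition of $X_{\Sigma}$ and the boundary $\tl{X} \setminus \CC^n$, such that $\bar{f}$ is a submersion on every stratum over a small disk $\Delta$ centred at $c$. This is a local problem near each torus orbit $T_{\sigma}$ with $\sigma \in \Sigma$. Working in affine toric coordinates $(t_1, \ldots, t_n)$ on a smooth chart containing $T_{\sigma}$, the pull-back of $f$ factorises as $f = t^{-d} \tilde{f}$, where $d_j = -\min_{v \in NP(f)} \langle u_j , v \rangle$ for the primitive generators $u_j$ of the ambient cone, and $\tilde{f}|_{T_{\sigma}}$ is, up to a toric coordinate change, the face part $f_{\gamma(\sigma)}$ associated to the face $\gamma(\sigma) \prec \Gamma_{\infty}(f)$ dual to $\sigma$.

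Two cases arise. If $0 \notin \gamma(\sigma)$, then some $d_j > 0$, so $\bar{f}$ has a pole along the corresponding boundary divisor and the closure of $f^{-1}(c)$ can meet $T_{\sigma}$ only inside $\{ f_{\gamma(\sigma)} = 0 \}$; non-degeneracy at infinity forces this locus to be smooth and transverse to every torus orbit, which yields submersivity and also tames the further blow-ups needed to resolve the indeterminacies. If $0 \in \gamma(\sigma)$, then $d = 0$, $\bar{f}$ extends regularly near $T_{\sigma}$, and $\bar{f}|_{T_{\sigma}} = f_{\gamma(\sigma)}$; since $\gamma(\sigma)$ is either the vertex $\{ 0 \}$ (with $c \neq f(0)$) or an atypical face $\gamma_i$ (with $c \notin K_i$), the fibre $\{ f_{\gamma(\sigma)} = c \}$ in $T_{\sigma}$ is smooth, while the transverse direction is again regulated by non-degeneracy applied to proper subfaces of $\gamma(\sigma)$ avoiding $0$. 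Combined with $c \notin f({\rm Sing} f)$ on $\CC^n$, these local pictures assemble into the required global stratification, so $f$ is a $C^{\infty}$ locally trivial fibration over $\Delta$.

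The main technical obstacle is the atypical case $0 \in \gamma(\sigma)$ with $\dim \gamma(\sigma) \geq 1$: one must simultaneously control $f_{\gamma(\sigma)}$ on the orbit $T_{\sigma}$ (handled by $c \notin K_i$) and the normal behaviour of $f$ encoded by subfaces of $\gamma(\sigma)$ not containing $0$, while reconciling this with the Whitney and submersion conditions on the toric blow-ups of the indeterminacy locus. This is a careful chart-by-chart bookkeeping, but I do not expect any new input to be needed beyond non-degeneracy at infinity and the standard toric/valuation dictionary.
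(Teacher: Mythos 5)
You should first note that the paper does not prove this statement at all: Theorem \ref{BES} is quoted from N\'emethi--Zaharia \cite{N-Z}, whose original argument runs through the curve selection lemma and Malgrange/M-tameness type estimates on the gradient along real-analytic paths going to infinity, not through a compactification. Your route --- smooth toric compactification adapted to $\Gamma_{\infty}(f)$, elimination of indeterminacy, and Thom's first isotopy lemma --- is the alternative strategy that underlies the machinery the paper itself uses for the \emph{converse} inclusion (cf.\ the construction of $g:\tl{X_{\Sigma}}\to\PP^1$ in the proof of Theorem \ref{SMT-1}, following \cite{L-S}, \cite{M-T-2}, \cite{T-T}, \cite{Zaharia}). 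So the approach is legitimate and genuinely different from \cite{N-Z}; your dichotomy on boundary orbits (face at infinity versus a face containing $0$, which is then either the vertex or an atypical face, since $\RR^n_+\in\Sigma$) is also correct.

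However, as written the proposal has a real gap exactly at the step you dismiss as bookkeeping. What $c\notin K_f$ plus non-degeneracy gives you directly is submersivity of $g$ restricted to the individual torus orbits of $X_{\Sigma}$ (on vertical orbits $g\equiv f(0)\neq c$, on horizontal orbits $g|_{T_{\sigma}}$ is $f_{\gamma_i}$ times a torus projection, on orbits at infinity the fiber closure sits in $\{f_{\gamma}=0\}$). This is not yet the hypothesis of Thom's first isotopy lemma. First, every fiber closure $\overline{f^{-1}(c)}$ passes through the indeterminacy locus $D\cap\overline{f^{-1}(0)}$, so $g^{-1}(c)$ meets the exceptional strata of the blow-up for \emph{every} $c$; submersivity of $g$ over a disk around $c$ on those exceptional strata is precisely where the explicit local normal forms of \cite{M-T-2}, \cite{T-T} are needed, and it cannot be obtained by appealing to non-degeneracy in one sentence. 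Second, the logical order ``construct a Whitney stratification such that $g$ is a submersion on every stratum'' is fragile: if the natural partition (orbits plus exceptional strata) has to be refined to achieve the Whitney conditions, submersivity is not inherited by smaller strata, and you cannot shrink the disk to dodge new critical values because $c$ is fixed. So you must actually prove that the specific orbit/exceptional partition is Whitney \emph{and} stratified-submersive over a neighborhood of $c$; that verification (or, in \cite{N-Z}'s original route, the gradient estimate at infinity) is the substantive content of the theorem, and it is missing from the proposal.
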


\begin{remark}
If for an atypical face $\gamma_i$ 
of $\Gamma_{\infty}(f)$ the face 
$\Delta = \gamma_i \cap NP(f-f(0)) 
\prec NP(f-f(0))$ of $NP(f-f(0))$ 
is not bad in the sense 
of N{\'e}methi-Zaharia \cite{N-Z}, 
then $\dim NP(f_{\gamma_i} -f(0))= \dim \Delta 
< \dim \gamma_i$, $f_{\gamma_i} - f(0)$ is 
a positively homogeneous Laurent polynomial 
on $T= (\CC^*)^n$ and 
we have $K_i = \{ f(0) \}$. 
Therefore the above inclusion $B_f \subset K_f$ 
coincides with the one in \cite{N-Z}. 
\end{remark} 
Moreover the authors of \cite{N-Z} 
proved the equality $B_f=K_f$ for $n=2$ 
and conjectured its validity 
in higher dimensions. Later Zaharia \cite{Zaharia} 
proved it for any $n \geq 2$ but 
under some supplementary assumptions on $f$. 
By using the definitions and the notations 
in Section \ref{sec:1} 
we can improve his result as follows. 

\begin{theorem}\label{SMT-1} 
Assume that $f$ has isolated singularities 
at infinity over $b \in K_f \setminus 
[f( {\rm Sing} f) \cup \{ f(0) \} ]$ 
and for any $1 \leq i \leq m$ such that $b \in K_i$ 
the relative interior $\relint ( \gamma_i)$ 
of $\gamma_i \prec \Gamma_{\infty}(f)$ 
is contained in $\Int ( \RR_+^n)$. 
Assume also that there exists $1 \leq i \leq m$ 
such that $b \in K_i$ and 
$\gamma_i \prec \Gamma_{\infty}(f)$ 
is relatively simple. 
Then we have $E_f(b)>0$ and hence $b \in B_f$. 
\end{theorem}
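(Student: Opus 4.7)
The plan is to realise the jump $E_f(b)$ as the Euler characteristic of a vanishing cycle complex with finite support on an intrinsic (singular) toric compactification of $\CC^n$, and then to extract positivity from the perversity statement of Lemma \ref{PL}. Let $\Sigma_C$ be a complete fan in $\RR^n$ obtained from the dual fan $\Sigma_0$ of $\Gamma_\infty(f)$, and let $\Sigma_C^\prime$ be a smooth subdivision. Denote the associated toric varieties by $X_{\Sigma_C}$ and $X := X_{\Sigma_C^\prime}$, and the proper toric morphism between them by $\pi : X \longrightarrow X_{\Sigma_C}$. After a further blow-up if necessary, one may assume that $f$ extends to a proper holomorphic map $\tilde f : X \longrightarrow \PP^1$; let $\tilde f^\prime : X_{\Sigma_C} \longrightarrow \PP^1$ be the induced map. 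Let $j : \CC^n \hookrightarrow X$ denote the open inclusion and set $\F = j_! \CC_{\CC^n}$. Proper base change identifies $\chi_c(f^{-1}(a))$ with $\chi((R\tilde f_* \F)_a)$, and the nearby/vanishing cycle triangle for $z - b$ on $\PP^1$ together with Proposition \ref{PDI} applied to $\tilde f$ yields
\begin{equation}
E_f(b) \; = \; (-1)^{n-1} \, \chi\bigl( \tilde f^{-1}(b);\, \varphi_{\tilde f - b}(\F) \bigr),
\end{equation}
a complex that vanishes on $\CC^n$ since $b \notin f(\mathrm{Sing}\,f)$, and which is therefore supported on the toric boundary.

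The second step is to reduce to isolated critical points by descending to the singular model $X_{\Sigma_C}$, since on the smooth $X$ the function $\tilde f$ may carry non-isolated critical loci along $T$-orbits refining an atypical cone $\sigma_i$. Applying Proposition \ref{PDI} to $\pi$ and $\tilde f^\prime - b$ gives
\begin{equation}
\chi\bigl( \tilde f^{-1}(b);\, \varphi_{\tilde f - b}(\F) \bigr) \; = \; \chi\bigl( (\tilde f^\prime)^{-1}(b);\, \varphi_{\tilde f^\prime - b}(R\pi_* \F) \bigr).
\end{equation}
By Definition \ref{ISAI} and the non-degeneracy of $f$, the right-hand vanishing cycle complex is supported on a finite subset of $X_{\Sigma_C}$ consisting, for each atypical face $\gamma_i$ with $b \in K_i$, of the isolated critical points of $f_{\gamma_i}|_{T_i}$ lying over $b$.

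For the local positivity I would fix such a point $p \in T_{\sigma_i} \subset X_{\Sigma_C}$. The equivalent conditions $\relint(\gamma_i) \subset \Int(\RR_+^n)$ and $\sigma_i \cap \RR_+^n = \{0\}$ imply that an affine toric chart of $X_{\Sigma_C}$ around $p$ is isomorphic to the affine toric variety $X_{\Sigma_{\sigma_i}}$ and that $\tilde f^\prime$, read in this chart, agrees with $f_{\gamma_i}$ up to a nowhere-vanishing monomial factor on the open orbit. Hence the stalk of $\varphi_{\tilde f^\prime - b}(R\pi_* \F)$ at $p$ is essentially the stalk of $\varphi_{f_{\gamma_i}-b}(\CC_{X_{\Sigma_{\sigma_i}}})$ at the corresponding isolated critical point of $f_{\gamma_i}|_{T_i}$. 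Since $\gamma_i$ is relatively simple, Lemma \ref{PL} gives that $\CC_{X_{\Sigma_{\sigma_i}}}$ is perverse up to shift, and because the shifted vanishing cycle functor preserves the perverse $t$-structure, this stalk is concentrated in a single cohomological degree of the same parity as $n-1$; combined with the outer sign $(-1)^{n-1}$, the relatively simple $\gamma_i$ contributes strictly positively to $E_f(b)$, while a parallel local analysis for the remaining atypical faces $\gamma_j$ with $b \in K_j$ (using $\sigma_j \cap \RR_+^n = \{0\}$) gives a matching sign for every other contribution.

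The principal obstacle I anticipate is the local identification on $X_{\Sigma_C}$: proving that around $p \in T_{\sigma_i}$ the proper push-forward $R\pi_* \F$ agrees, up to shift, with $\CC_{X_{\Sigma_{\sigma_i}}}$ and that $\tilde f^\prime$ is locally modelled by $f_{\gamma_i}$. The hypothesis $\sigma_i \cap \RR_+^n = \{0\}$ is indispensable here, as otherwise $T_{\sigma_i}$ would meet the coordinate hyperplanes of $\CC^n$ and the monomial used to convert $f$ into $f_{\gamma_i}$ would vanish on a divisor, breaking the clean perversity input from Lemma \ref{PL}. Equally delicate is the bookkeeping ensuring that contributions from non-relatively-simple atypical $\gamma_j$ with $b \in K_j$ do not cancel the positive contribution from the distinguished $\gamma_i$.
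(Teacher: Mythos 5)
Your global strategy matches the paper's: express $E_f(b)$ as the Euler characteristic of a vanishing cycle complex on a toric compactification, push the computation down by the proper toric morphism $\pi$ to the singular model so that the support becomes finite (this is exactly where Definition \ref{ISAI} and Proposition \ref{PDI} enter), and then use perversity to force concentration in a single degree. However, the local step on which you hang positivity contains a genuine error. Near a point $p_{i,j}\in T_{\sigma_i}$ the direct image $R\pi_*\,\iota_!\CC_{\CC^n}$ is \emph{not} isomorphic (even up to shift) to the constant sheaf $\CC_{X_{\Sigma_{\sigma_i}}}$: since $\sigma_i\cap\RR^n_+=\{0\}$, the only part of $\CC^n$ lying in the relevant open chart is the dense torus $T$, so the restriction of the pushforward to $X_{\Sigma_i}$ is the extension by zero $(\iota_i)_!\CC_T$ of the constant sheaf on the open orbit, which vanishes on the whole toric boundary. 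This misidentification has two consequences. First, your claim that the contributions of the \emph{other} atypical faces $\gamma_j$ with $b\in K_j$ have ``a matching sign'' would, in your scheme, need $\CC_{X_{\Sigma_{\sigma_j}}}$ to be perverse, i.e.\ $\gamma_j$ relatively simple --- but the theorem assumes relative simplicity only for one distinguished $i$. The paper gets nonnegativity for \emph{every} such $j$ because $(\iota_j)_!\CC_T$ is perverse (up to shift) for any $\sigma_j$ with $\sigma_j\cap\RR^n_+=\{0\}$, simply because $T\hookrightarrow X_{\Sigma_j}$ is an affine open embedding; Lemma \ref{PL} is not used there at all.

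Second, even where your identification is replaced by the correct one, perversity plus finite support only yields concentration in one degree, hence $\mu_{i,j}\geq 0$; it does not give the strict positivity you assert. The paper's mechanism for strictness is different and is missing from your plan: for the relatively simple $\gamma_i$ one decomposes, at the level of Euler characteristics, $(\iota_i)_!\CC_T$ as the alternating sum over faces $\tau\prec\sigma_i$ of the constant sheaves on the orbit closures $Y_\tau=\overline{T_\tau}$, applies Lemma \ref{PL} to each $Y_\tau$ (this is where relative simplicity is used) to make every term $\mu_{i,j,\tau}$ nonnegative, and then observes that the term for the minimal orbit $\tau=\sigma_i$ is strictly positive because $T_{\sigma_i}$ is smooth and $p_{i,j}$ is an isolated critical point of $f_{\gamma_i}$ there, so its Milnor number is positive. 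Without this (or some substitute argument showing the vanishing cycle stalk is nonzero), your proof establishes at best $E_f(b)\geq 0$, which does not suffice to conclude $b\in B_f$.
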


\begin{proof} 
By our assumption, for any $1 \leq i \leq m$ 
the hypersurface $f_{\gamma_i}^{-1} 
(b) \subset T_{i} 
\simeq (\CC^*)^{\dim \gamma_i}$ in $T_{i}
= \Spec ( \CC [L_{\gamma_i} \cap \ZZ^n]) $ 
has only isolated singular points 
at $p_{i,1}, \ldots, p_{i, n_i}$. Here some 
$n_i$ can be zero. Obviously we have 
$n_i>0$ if and only if $b \in K_i$.  
From now we shall freely use the 
smooth compactification 
$\tl{X_{\Sigma}}$ of $\CC^n$ 
and the notations related to it 
in Section \ref{sec:3}. 
Let ${\rm Cone}_{\infty}(f) \subset \RR^n_v$ 
be the cone generated by $\Gamma_{\infty}(f)$. 
We define its dual cone $C \subset \RR^n_u$ by 
\begin{equation}
C= \{ u \in \RR^n \ | \ \langle u, v \rangle 
 \geq 0 \ \ \text{for any} \ v \in 
 {\rm Cone}_{\infty}(f) \}. 
\end{equation}
Then a cone $\sigma \in \Sigma$ is 
at infinity if and only if 
it is not contained in $C$.  
We shall prove that 
the jump $E_f(b) \in \ZZ$ 
of the constructible function on $\CC$ 
\begin{equation}
\chi_c(t)= \sum_{j \in \ZZ} (-1)^j \dim 
H^j_c ( f^{-1}(t); \CC ) \ \ \ \ (t \in \CC )
\end{equation} 
at the point $b \in K_f \setminus 
[f( {\rm Sing} f) \cup \{ f(0) \} ]$ 
is positive. 
For the point $b \in \CC$ define a 
function $h: \CC \longrightarrow 
\CC$ on $\CC$ by $h(t)=t-b$ so that 
we have $h^{-1}(0)= \{ b \}$. Then 
we have 
\begin{equation}
E_f(b)= (-1)^{n-1} 
\sum_{j \in \ZZ} (-1)^j \dim 
H^j \varphi_h(R f_! \CC_{\CC^n})_b,  
\end{equation}
where $\varphi_h: \Dbc (\CC ) \longrightarrow 
\Dbc ( \{ b \} )$ is Deligne's vanishing cycle 
functor associated to $h$. Since we have 
$f = g \circ \iota$ on a neighborhood of 
$b \in K_f \setminus 
[f( {\rm Sing} f) \cup \{ f(0) \} ]$ 
and $g$ is proper, by Proposition \ref{PDI} 
we obtain an isomorphism 
\begin{equation}
\varphi_h(R  f_! \CC_{\CC^n}) \simeq 
R(g|_{g^{-1}(b)})_* \varphi_{h \circ g}
( \iota_! \CC_{\CC^n}). 
\end{equation}
This implies that for the constructible function 
$\chi \{ \varphi_{h \circ g}
( \iota_! \CC_{\CC^n}) \} 
\in F_{\ZZ} (g^{-1}(b))$ 
on $g^{-1}(b)=(h \circ g)^{-1}(0) 
\subset \tl{X_{\Sigma}}$ we have 
\begin{equation}
\sum_{j \in \ZZ} (-1)^j \dim 
H^j \varphi_h(R f_! \CC_{\CC^n})_b 
 = 
\int_{g^{-1}(b)} \chi \{ \varphi_{h \circ g}
( \iota_! \CC_{\CC^n}) \}. 
\end{equation}
Hence for the calculation of $E_f(b)$, 
it suffices to calculate 
\begin{equation}
\chi \{ \varphi_{h \circ g}
( \iota_! \CC_{\CC^n}) \} (p) = 
\sum_{j \in \ZZ} (-1)^j \dim 
 H^j \varphi_{h \circ g}
( \iota_! \CC_{\CC^n})_p  
\end{equation}
at each point $p$ of $g^{-1}(b)$. 
Let $\Sigma_C$ (resp. $\Sigma_C^{\prime}$) 
be the fan formed by all the faces of 
the cone $C$ (resp. by all the cones 
in $\Sigma$ contained in $C$)  
and denote by $X_{\Sigma_C}$ 
(resp. $X_{\Sigma_C^{\prime}}$)  
the possibly singular (resp. smooth) 
toric variety associated to it. 
Then $X:= X_{\Sigma_C^{\prime}} =  
\sqcup_{\sigma \subset C} T_{\sigma}$ 
is an open subset of $X_{\Sigma}$ 
and there exists a natural proper 
morphism 
\begin{equation}
\pi : 
X= X_{\Sigma_C^{\prime}} \longrightarrow 
X_{\Sigma_C}
\end{equation}
of toric varieties. Note that 
for the pole divisor $D$ of (the 
meromorphic extension of) $f$ to 
$X_{\Sigma}$ (see Section \ref{sec:3}) we have $X= 
X_{\Sigma} \setminus D$. Recall also that 
the centers of the blow-ups in the 
construction of $\tl{X_{\Sigma}} 
\longrightarrow X_{\Sigma}$ are 
above $D=X_{\Sigma} \setminus X$. 
Hence we can consider $X$ also as an open subset 
of $\tl{X_{\Sigma}}$. 
Since the Newton polytope 
$NP(f)$ of $f$ is contained in the 
dual cone $C^{\circ} = {\rm Cone}_{\infty}(f)$ 
of $C$ and 
\begin{equation}
X_{\Sigma_C} =  
\Spec ( \CC [ C^{\circ} \cap \ZZ^n]),  
\end{equation}
we can naturally regard $f$ as regular 
functions on $X_{\Sigma_C}$ and 
$X= X_{\Sigma_C^{\prime}}$. This implies 
that $X= X_{\Sigma_C^{\prime}}$ is an 
open subset of  $g^{-1}( \CC ) \cap 
\tl{X_{\Sigma}}$. In particular, 
if $\sigma \in \Sigma_C^{\prime}$ is not 
contained in $\RR^n_+$ 
then $T_{\sigma} \subset X \setminus \CC^n$ 
and $f$ extends holomorphically to 
$T_{\sigma}$. Namely $T_{\sigma}$ 
is a horizontal 
$T$-orbit in $X \setminus \CC^n$. 
By our assumption $b \notin f( {\rm Sing} f)$ 
and the result 
at the end of Section \ref{sec:3}, we 
see also that the support of the 
constructible sheaf 
$\varphi_{h \circ g}( \iota_! \CC_{\CC^n}) 
\in \Dbc (g^{-1}(b))$ is contained in 
$(X \setminus \CC^n) \cap g^{-1}(b)$. 
We thus obtain an equality 
\begin{equation}
E_f(b) = (-1)^{n-1} 
\int_{(X \setminus \CC^n) \cap g^{-1}(b)} 
\chi \{ \varphi_{h \circ g}
( \iota_! \CC_{\CC^n}) \}. 
\end{equation}
Namely, for the calculation of $E_f(b)$ 
it suffices to calculate the constructible 
function $\chi \{ \varphi_{h \circ g}
( \iota_! \CC_{\CC^n}) \}$ 
only on $T$-orbits 
in $X \setminus \CC^n$ associated to 
the cones $\sigma \in 
\Sigma_C^{\prime} \subset \Sigma$ such that 
$\relint ( \sigma ) \subset C  
\setminus \RR_+^n$. For $\sigma \in 
\Sigma_C^{\prime} \subset \Sigma$ such that 
$\relint ( \sigma ) \subset \Int (C) 
\setminus \RR_+^n$ we have $\gamma_{\sigma} 
= \{ 0 \} \prec \Gamma_{\infty}(f)$ and the 
restriction of $g|_X: X \longrightarrow \CC$ 
to the $T$-orbit $T_{\sigma} \subset X$ 
is the constant function $f(0) \in \CC$. 
Hence we get $g^{-1}(b) \cap T_{\sigma} 
= \emptyset$ for the point $b \in K_f 
\setminus [ f({\rm Sing} f) \cup \{ f(0) \} ]$. 
For $1 \leq i \leq m$ let 
$\sigma_i= \sigma ( \gamma_i) \in \Sigma_0$ 
be the cone which corresponds 
to $\gamma_i$ in the dual fan $\Sigma_0$ 
of $\Gamma_{\infty}(f)$. Recall that by 
the definition of atypical faces we have 
$0 \in \gamma_i$ and the face 
$\sigma_i \prec C$ of $C$ is not contained 
in $\RR^n_+$. For $\sigma \in 
\Sigma_C^{\prime} \subset \Sigma$ such that 
$\relint ( \sigma ) \subset \partial C  
\setminus \RR_+^n$ there exists unique 
$1 \leq i \leq m$ for which we have 
$\relint ( \sigma ) \subset 
\relint ( \sigma_i )$. If $\dim \sigma 
= \dim \sigma_i$ we have an isomorphism 
$T_{\sigma} \simeq T_{i}= 
\Spec ( \CC [L_{\gamma_i} \cap \ZZ^n]) \simeq 
( \CC^*)^{\dim \gamma_i}$ and the 
restriction of $g|_X: X \longrightarrow \CC$ 
to $T_{\sigma} \subset X$ is naturally 
identified with 
$f_{\gamma_i}: T_i \longrightarrow \CC$. 
This implies that the hypersurface 
$g^{-1}(b) \cap T_{\sigma} \subset 
T_{\sigma} \simeq T_{i}$ has only isolated 
singular points $p_{i,1}, \ldots, p_{i, n_i} 
\in T_{\sigma} \simeq T_{i}$ and 
\begin{equation}
T_{\sigma} \cap \ \supp \ 
\varphi_{h \circ g} ( \iota_! \CC_{\CC^n}) 
\subset \{  p_{i,1}, \ldots, p_{i, n_i} \} 
\end{equation}
in this case. On the other hand, if 
$\dim \sigma < \dim \sigma_i$ we have 
$\dim T_{\sigma} > \dim T_i$ and for the 
hypersurface 
$g^{-1}(b) \cap T_{\sigma} \subset 
T_{\sigma}$ there exists an isomorphism 
\begin{equation}\label{NIS} 
g^{-1}(b) \cap T_{\sigma} \simeq 
f_{\gamma_i}^{-1}(b) \times 
( \CC^*)^{\dim T_{\sigma} - \dim T_i}. 
\end{equation}
This implies that 
$g^{-1}(b) \cap T_{\sigma} \subset 
T_{\sigma}$ has non-isolated singular points  
if $n_i>0$. From now on, we shall overcome 
this difficulty by using Proposition \ref{PDI}. 
For $1 \leq i \leq m$ let $\Sigma_i$ 
be the fan in $\RR^n$ formed by 
all the faces of $\sigma_i$ and 
denote by $X_{\Sigma_i}$ 
the (possibly singular) 
toric variety associated to it. 
Then $X_{\Sigma_i}$ is an open 
subset of $X_{\Sigma_C}$. 
Let $\sigma_i^{\circ} 
\subset \RR^n$ be the dual 
cone of $\sigma_i$ in $\RR^n$. Then 
$\sigma_i^{\circ} \simeq C_i \times 
\RR^{\dim \gamma_i}$ for a proper convex cone 
$C_i$ in $\RR^{n- \dim \gamma_i}$ and 
we have an isomorphism 
\begin{equation}
X_{\Sigma_i} \simeq 
\Spec ( \CC [ \sigma_i^{\circ} \cap \ZZ^n]). 
\end{equation}
Note that the (minimal) $T$-orbit $T_{\sigma_i}$ in 
$X_{\Sigma_i}$ which corresponds to 
$\sigma_i \in \Sigma_i$ is naturally 
identified with $T_{i}= 
\Spec ( \CC [L_{\gamma_i} \cap \ZZ^n]) \simeq 
( \CC^*)^{\dim \gamma_i}$. 
More precisely $X_{\Sigma_i}$ is the product 
$X_i \times T_{\sigma_i}$ of the 
$(n - \dim \gamma_i)$-dimensional affine toric 
variety $X_i= 
\Spec ( \CC [C_i \cap \ZZ^{n- \dim \gamma_i} ])$ 
and $T_{\sigma_i} \simeq T_{i} 
\simeq (\CC^*)^{\dim \gamma_i}$. 
Since $NP(f) \subset \sigma_i^{\circ}$ and 
$f \in \CC [ \sigma_i^{\circ} \cap \ZZ^n]$, we 
can naturally regard $f$ as a regular 
function on $X_{\Sigma_i}$. We denote it by 
$f_i:X_{\Sigma_i} \longrightarrow \CC$. 
For $1 \leq i \leq m$ let 
$\Sigma_i^{\prime} \subset \Sigma$ 
be the subfan of $\Sigma$ consisting of 
the cones in $\Sigma$ contained in 
$\sigma_i$ and 
denote by $X_{\Sigma_i^{\prime}}$ 
the smooth toric variety associated to it. 
Then $X_{\Sigma_i^{\prime}}$ is an open 
subset of $X \subset \tl{X_{\Sigma}}$ and 
there exists a proper 
morphism 
\begin{equation}
\pi_i : 
X_{\Sigma_i^{\prime}} \longrightarrow 
X_{\Sigma_i}
\end{equation}
of toric varieties. 
Moreover we have a commutative diagram 
\begin{equation}
\begin{CD}
X_{\Sigma_i^{\prime}} @>>> 
X= X_{\Sigma_C^{\prime}}  
\\
@V{\pi_i}VV   @VV{\pi}V
\\
X_{\Sigma_i}  @>>> X_{\Sigma_C} 
\end{CD}
\end{equation}
such that $\pi^{-1} X_{\Sigma_i} 
=X_{\Sigma_i^{\prime}} \subset X$, 
where the horizontal arrows are 
the inclusion maps. It is also easy to 
see that the closed subset 
$(X \setminus \CC^n) \cap g^{-1}(b)$ 
of $X$ is covered by 
the affine open subvarieties 
$X_{\Sigma_1^{\prime}}, \ldots, 
X_{\Sigma_m^{\prime}} \subset X$. 
Note that for the 
restriction $g_i=g|_{X_{\Sigma_i^{\prime}}}: 
X_{\Sigma_i^{\prime}} \longrightarrow \CC$ 
of $g|_X$ we have $g_i=f_i \circ \pi_i$. 
Then by applying Proposition \ref{PDI} to the 
proper morphism $\pi_i : 
X_{\Sigma_i^{\prime}} \longrightarrow 
X_{\Sigma_i}$ we obtain an isomorphism 
\begin{equation}
R(\pi_i|_{g_i^{-1}(b)})_* \varphi_{h \circ g_i}
( \iota_! \CC_{\CC^n}|_{X_{\Sigma_i^{\prime}}} ) 
\simeq 
\varphi_{h \circ f_i} \left\{ 
R( \pi_i)_* 
( \iota_! \CC_{\CC^n}|_{X_{\Sigma_i^{\prime}}} ) 
\right\}. 
\end{equation}
The advantage to consider 
$\varphi_{h \circ f_i} \{ R( \pi_i)_* 
( \iota_! \CC_{\CC^n}|_{X_{\Sigma_i^{\prime}}} ) 
\}$ instead of $\varphi_{h \circ g_i}
( \iota_! \CC_{\CC^n}|_{X_{\Sigma_i^{\prime}}} ) $ 
is that its support is a discrete subset of 
$f_i^{-1}(b) \subset X_{\Sigma_i} 
\subset X_{\Sigma_C}$ 
by our assumption that $f$ has isolated singularities 
at infinity over $b \in K_f \setminus 
[f( {\rm Sing} f) \cup \{ f(0) \} ]$.  
Set 
\begin{equation}
\F_i = R( \pi_i)_* 
( \iota_! \CC_{\CC^n}|_{X_{\Sigma_i^{\prime}}} ) 
\simeq 
R( \pi_i )_! \CC_{\CC^n \cap 
X_{\Sigma_i^{\prime}}} 
 \in \Dbc ( X_{\Sigma_i} ). 
\end{equation}
Then the topological 
integral 
\begin{equation}
\int_{g^{-1}(b)} 
\chi \{ \varphi_{h \circ g}
( \iota_! \CC_{\CC^n}) \} = 
\int_{(X \setminus \CC^n) \cap g^{-1}(b)} 
\chi \{ \varphi_{h \circ g}
( \iota_! \CC_{\CC^n}) \} 
\end{equation}
is equal to 
\begin{equation}
\sum_{i=1}^m 
\sum_{j=1}^{n_i} \chi \{ \varphi_{h \circ f_i} 
( \F_i )_{p_{i,j}} \}. 
\end{equation}
If $b \notin K_i$ ($\Longleftrightarrow 
n_i=0$) we have $\varphi_{h \circ f_i} 
( \F_i ) \simeq 0$ on a neighborhood of 
$T_{\sigma_i} \subset X_{\Sigma_i}$. 
Let us consider the remaining case where 
$b \in K_i$ ($\Longleftrightarrow 
n_i>0$). Then by our assumption 
$\relint ( \gamma_i) \subset \Int ( \RR_+^n)$ 
we have $\sigma_i \cap \RR_+^n = \{ 0 \}$. 
This implies that for the embedding 
$\iota_i: T=( \CC^*)^n \hookrightarrow 
X_{\Sigma_i}$ there exists an isomorphism 
$\F_i \simeq ( \iota_i)_! \CC_T$. 
Hence by Lemma \ref{Lemma-1}, $\F_i$ 
is a perverse sheaf on 
$X_{\Sigma_i}$ (up to some shift). Since 
the support of $\varphi_{h \circ f_i} 
( \F_i ) $ is discrete, by 
Lemma \ref{Lemma-2} 
we thus obtain the concentration 
\begin{equation}
H^l \varphi_{h \circ f_i} 
( \F_i )_{p_{i,j}} \simeq 0 \ \ \ \ 
(l \not= n-1) 
\end{equation}
for any $1 \leq j \leq n_i$. Set 
$\mu_{i,j}= \dim H^{n-1} \varphi_{h \circ f_i} 
( \F_i )_{p_{i,j}} \geq 0$. Then 
$E_f(b)$ can be expressed as a sum of 
non-negative integers as follows:  
\begin{equation}
E_f(b)= (-1)^{n-1} 
\int_{(X \setminus \CC^n) \cap g^{-1}(b)} 
\chi \{ \varphi_{h \circ g}
( \iota_! \CC_{\CC^n}) \} = \sum_{i=1}^m 
\sum_{j=1}^{n_i} \mu_{i,j}. 
\end{equation}
By our assumption 
there exists $1 \leq i \leq m$ 
such that $n_i>0$ 
($\Longleftrightarrow b \in K_i$) and 
$\gamma_i \prec \Gamma_{\infty}(f)$ 
is relatively simple. Then the cone 
$\sigma_i \in \Sigma_0$ 
satisfies the condition 
$\sigma_i \cap \RR_+^n = \{ 0 \}$. 
For a face $\tau \prec \sigma_i$ of $\sigma_i$ 
we set $Y_{\tau}=  
\overline{T_{\tau}} \subset X_{\Sigma_i}$ 
and $f_{\tau}=f_i|_{Y_{\tau}}: Y_{\tau} \longrightarrow \CC$. 
Note that we have 
$T_{\sigma_i} = Y_{ \sigma_i }$. 
Then for any $1 \leq j \leq n_i$ we can 
easily show that 
$(-1)^{n-1} \mu_{i,j} = 
\chi \{ \varphi_{h \circ f_i} 
( \F_i )_{p_{i,j}} \} = 
\chi \{ \varphi_{h \circ f_i} 
( ( \iota_i)_! \CC_T )_{p_{i,j}} \}$ 
is equal to the alternating sum 
\begin{equation}
\sum_{\tau \prec \sigma_i}  (-1)^{\dim \tau}  
\chi \{ \varphi_{h \circ f_{\tau}} 
( \CC_{Y_{\tau}} )_{p_{i,j}} \}. 
\end{equation}
Here we used the additivity of the vanishing 
cycle functor $\varphi_{h \circ f_i} ( \cdot )$. 
Since $\gamma_i$ is relatively simple, by Lemma \ref{PL} 
for any face $\tau \prec \sigma_i$ of 
$\sigma_i$ the constant sheaf 
$\CC_{Y_{\tau}}$ on $Y_{\tau}$ is perverse 
(up to some shift). Moreover by our 
assumption that $f$ has isolated singularities 
at infinity over $b \in 
K_f \setminus 
[f( {\rm Sing} f) \cup \{ f(0) \} ]$, 
the support of $\varphi_{h \circ f_{\tau}} 
( \CC_{Y_{\tau}} )$ is discrete on a neighborhood 
of $T_{\sigma_i} \subset X_{\Sigma_i}$. 
By Lemma \ref{Lemma-2} we 
thus obtain the concentration 
\begin{equation}
H^l \varphi_{h \circ f_{\tau}} 
( \CC_{Y_{\tau}} )_{p_{i,j}} \simeq 0 \ \ \ \ 
(l \not= \dim Y_{\tau}-1 = n- \dim \tau - 1) 
\end{equation}
for any $1 \leq j \leq n_i$ and 
$\tau \prec \sigma_i$. Set 
\begin{equation}
\mu_{i,j, \tau}=
\dim H^{n- \dim \tau - 1} \varphi_{h \circ f_{\tau}} 
( \CC_{Y_{\tau}} )_{p_{i,j}} \geq 0. 
\end{equation}
Then $\mu_{i,j}= (-1)^{n-1} 
\chi \{ \varphi_{h \circ f_i} 
( \F_i )_{p_{i,j}} \} \geq 0$ is expressed 
as a sum of non-negative integers as follows: 
\begin{equation}
\mu_{i,j} = 
\sum_{\tau \prec \sigma_i} 
\mu_{i,j, \tau} \geq 0. 
\end{equation}
Moreover the integer 
$\mu_{i,j, \sigma_i}$ 
is positive by the smoothness of 
$T_{\sigma_i} = 
Y_{\sigma_i}$. 
Consequently we get $E_f(b)>0$. 
This completes the proof. 
\end{proof}

In the generic (Newton non-degenerate) case,  
for any $1 \leq i \leq m$ and 
$1 \leq j \leq n_i$ we can explicitly 
calculate the above integer $\mu_{i,j} \geq 0$ 
by \cite[Theorem 3.4, Corollary 3.6 
and Remark 4.3]{M-T-1} as follows. 
First by multiplying a monomial on $T_{\sigma_i} 
\simeq (\CC^*)^{\dim \gamma_i}$ to $f_i$ we may 
assume that $f_i$ is a regular function on 
$X_i \times \CC^{\dim \gamma_i}$. Next by a 
translation in $\CC^{\dim \gamma_i}$ we reduce 
the problem to the case $p_{i,j}=0 
\in \CC^{\dim \gamma_i}$. 
Then we can apply \cite[Theorem 3.4 
and Corollary 3.6]{M-T-1} 
to $\varphi_{h \circ f_i}( \F_i)_{p_{i,j}} \simeq 
\psi_{h \circ f_i}( \F_i)_{p_{i,j}}$ 
if $f_i:(X_i \times \CC^{\dim \gamma_i}, 0) 
\longrightarrow 
( \CC , 0)$ is Newton non-degenerate at 
$p_{i,j}=0 \in \CC^{\dim \gamma_i}$. 
In this way, even if $\sigma_i$ is not 
simplicial we can express the integer 
$\mu_{i,j} \geq 0$ as an 
alternating sum of the normalized 
volumes of polytopes in $\RR^n_+ \setminus 
\Gamma_+(f)_{i,j}$, where $\Gamma_+(f)_{i,j} 
\subset \RR^n_+$ is the (local) 
Newton polyhedron of $f_i$ at 
$p_{i,j}$. See \cite[Corollary 3.6]{M-T-1} 
for the details. 
We conjecture that it is positive 
in our situation. In the case where 
$n=3$ we have the following stronger result. 

\begin{theorem}\label{SMT-2} 
Assume that $n=3$ and 
$f$ has isolated singularities 
at infinity over $b \in K_f \setminus 
[f( {\rm Sing} f) \cup \{ f(0) \} ]$. 
Then we have $E_f(b)>0$ and hence $b \in B_f$. 
\end{theorem}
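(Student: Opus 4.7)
The plan is to follow the proof of Theorem \ref{SMT-1} with the modifications needed for the $n=3$ case. First, I would reproduce verbatim the initial reduction: build the smooth toric compactification $\tl{X_\Sigma}$ of $\CC^3$, the proper extension $g$, apply Proposition \ref{PDI}, and reduce $E_f(b)$ to a sum of local vanishing cycle Euler characteristics at the isolated points of singularity at infinity $p_{i,j}$, obtaining
\[
E_f(b) = (-1)^{n-1} \sum_{i=1}^m \sum_{j=1}^{n_i} \chi\{\varphi_{h \circ f_i}(\F_i)_{p_{i,j}}\},
\]
where $\F_i = R(\pi_i)_! \CC_{\CC^3 \cap X_{\Sigma_i^{\prime}}}$. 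A key benefit of $n = 3$ is that every atypical face is automatically relatively simple, since $\dim \sigma_i = 3 - \dim \gamma_i \leq 2 \leq 3$, so Lemma \ref{PL} applies uniformly to all orbit closures $Y_\tau = \overline{T_\tau} \subset X_{\Sigma_i}$.

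The essential new obstacle compared to Theorem \ref{SMT-1} is handling atypical faces $\gamma_i$ (necessarily one-dimensional, so $\sigma_i$ is two-dimensional) for which $\sigma_i \cap \RR_+^3$ is a ray or a two-dimensional cone. In these cases the identification $\F_i \simeq (\iota_i)_! \CC_T$ used in Theorem \ref{SMT-1} is unavailable, and $\F_i$ picks up extra contributions from boundary $T$-orbits $T_\tau$ with $\tau \subset \sigma_i \cap \RR_+^3$. To overcome this, I would stratify $\CC^3 \cap X_{\Sigma_i^{\prime}}$ according to its $T$-orbit decomposition, apply the additivity of the vanishing cycle functor, and rewrite
\[
(-1)^{n-1} \chi\{\varphi_{h \circ f_i}(\F_i)_{p_{i,j}}\} = \sum_{\tau \prec \sigma_i} c_\tau \cdot \chi\{\varphi_{h \circ f_\tau}(\CC_{Y_\tau})_{p_{i,j}}\}
\]
as an alternating sum indexed by faces $\tau \prec \sigma_i$, with explicit signs $c_\tau$ coming from the stratification. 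The $n = 3$ restriction enters essentially here: since $\dim \sigma_i \leq 2$ there are only a few faces $\tau$ to enumerate, and atypicality forces at most one edge of $\sigma_i$ to lie in $\RR_+^3$, which keeps the combinatorics fully manageable.

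Each resulting term is then analyzed exactly as in the last paragraphs of the proof of Theorem \ref{SMT-1}. The perversity of $\CC_{Y_\tau}$ from Lemma \ref{PL}, together with the discreteness of the support of $\varphi_{h \circ f_i}(\F_i)$ guaranteed by the isolated-singularities-at-infinity hypothesis over $b$, yields concentration of $\varphi_{h \circ f_\tau}(\CC_{Y_\tau})_{p_{i,j}}$ in the single degree $\dim Y_\tau - 1$ via \cite[Proposition 6.1.1]{Dimca}. Consequently the alternating sum rewrites as a sum of non-negative integers $\mu_{i,j,\tau}$, with $\mu_{i,j,\sigma_i}$ strictly positive by the smoothness of $T_{\sigma_i}$ together with the fact that $p_{i,j}$ is a genuine singular point of $f_{\gamma_i}^{-1}(b)$. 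The main difficulty throughout is verifying that all signs in the stratification-induced alternating sum align so that the final expression is a sum of non-negative integers; once this is in hand, summing over $i, j$ yields $E_f(b) > 0$, and hence $b \in B_f$.
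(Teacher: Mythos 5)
Your framework is the same as the paper's up to the decisive point: reduce, as in Theorem \ref{SMT-1}, to the local quantities $\chi \{ \varphi_{h \circ f_i}(\F_i)_{p_{i,j}} \}$, observe that for $n=3$ every $\sigma_i$ is simplicial so Lemma \ref{PL} is always available, and isolate the real difficulty, namely the faces with $\sigma_i \cap \RR_+^3 \neq \{ 0 \}$, where $\F_i \not\simeq (\iota_i)_! \CC_T$. But precisely there your argument stops short: you posit an expansion $\sum_{\tau \prec \sigma_i} c_\tau \, \chi \{ \varphi_{h \circ f_\tau}(\CC_{Y_\tau})_{p_{i,j}} \}$ with unspecified coefficients and then state that ``the main difficulty is verifying that all signs align''. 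That verification \emph{is} the proof in the paper, and it is not automatic: the paper pins down $\F_i$ in each configuration of $\sigma_i \cap \RR_+^3$ by computing the fibers of $\pi_i|_{\CC^3 \cap X_{\Sigma_i^{\prime}}}$ over the boundary orbits ($\CC^*$, with compactly supported Euler characteristic $0$, when $\dim \tau \cap \RR_+^3 < \dim \tau$; the nodal curve $\{x_1x_2=0\}$, with Euler characteristic $1$, when $\dim \sigma_i \cap \RR_+^3 = 2$), obtaining for instance $\F_i \simeq \CC_{X_{\Sigma_i} \setminus \overline{T_{i,2}}}$ when $\sigma_i \cap \RR_+^3$ equals the edge $\rho_{i,1}$, and $\chi\{\varphi_{h\circ f_i}(\F_i)_{p_{i,j}}\} = \chi\{\varphi_{h\circ f_i}(\CC_T)_{p_{i,j}}\} + \chi\{\varphi_{h\circ f_i}(\CC_{T_{\sigma_i}})_{p_{i,j}}\}$ in the two-dimensional case. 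Only after this explicit, case-by-case determination of the coefficients do perversity and the concentration result give non-negativity.

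Moreover, your proposed source of strict positivity --- $\mu_{i,j,\sigma_i} > 0$ coming from the smooth closed orbit $T_{\sigma_i}$ --- actually fails in one of these cases. When $\sigma_i \cap \RR_+^3$ is an edge of $\sigma_i$, the constructible function of $\F_i$ is the indicator of $X_{\Sigma_i}$ minus that of $\overline{T_{i,2}}$, so in your orbit-closure expansion the term on $Y_{\sigma_i} = T_{\sigma_i}$ appears with coefficient zero and your designated positive term is simply absent; the paper instead extracts strict positivity from $- \chi \{ \varphi_{h \circ f_i} ( \CC_{\overline{T_{i,2}}} )_{p_{i,j}} \} > 0$, using the smoothness of the two-dimensional orbit closure $\overline{T_{i,2}}$ (while in the case $\dim \sigma_i \cap \RR_+^3 = 1$ with the ray interior to $\sigma_i$ the contribution does reduce to that of $\CC_T$ and the Theorem \ref{SMT-1} argument applies). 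So there is a genuine gap: both the sign bookkeeping, which requires the fiberwise computation of $\chi(\F_i)$, and the case-dependent identification of a strictly positive term are missing from your proposal, and these are exactly what the paper's proof of Theorem \ref{SMT-2} supplies.
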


\begin{proof}
The proof is similar to that of Theorem 
\ref{SMT-1}. We shall use the notations 
in it. For any $1 \leq i \leq m$ the dimension 
of the atypical face $\gamma_i 
\prec \Gamma_{\infty}(f)$ is $1$ or $2$. 
If $\dim \gamma_i =2$ and $n_i>0$ we 
have $\chi \{ \varphi_{h \circ f_i} 
( \F_i )_{p_{i,j}} \} >0$ for any 
$1 \leq j \leq n_i$ by the result of 
Zaharia \cite{Zaharia}. 
If $\dim \gamma_i =1$ and $n_i>0$ 
the two-dimensional cone $\sigma_i$ 
is simplicial but $\sigma_i \cap \RR_+^3$ 
can be bigger than $\{ 0 \}$. Nevertheless 
we can show the positivity 
 $\chi \{ \varphi_{h \circ f_i} 
( \F_i )_{p_{i,j}} \} >0$ for any 
$1 \leq j \leq n_i$ by calculating 
$\F_i \in \Dbc ( X_{\Sigma_i} )$ 
very explicitly depending on how 
$\sigma_i$ intersects $\RR_+^3$. 
First we consider the case where 
$\dim \sigma_i=2$, 
$\dim \sigma_i \cap \RR_+^3=1$ 
and $\relint ( \sigma_i \cap \RR_+^3) 
\subset \relint ( \sigma_i)$. Then 
for any point $q \in T_{\sigma_i} 
\subset X_{\Sigma_i}$ its fiber of the map 
\begin{equation}
\pi_i|_{\CC^3 \cap X_{\Sigma_i^{\prime}}} : 
\CC^3 \cap X_{\Sigma_i^{\prime}} 
\longrightarrow X_{\Sigma_i}
\end{equation}
is isomorphic to $\CC^*$. 
For its cohomology groups with 
compact support $H^l_c ( \CC^*; \CC )$ 
$(l \in \ZZ )$ we have 
\begin{equation}
H^l_c ( \CC^* ; \CC ) 
 \simeq \begin{cases}
 \CC  & (l=1,2), \\
\ 0 & (l \not= 1,2).  
\end{cases}
\end{equation}
Hence for the point 
$q \in T_{\sigma_i}$ 
we have   
\begin{equation}\label{Erq1} 
 H^l ( \F_i)_q 
 \simeq \begin{cases}
 \CC  & (l=1,2), \\
\ 0 & (l \not= 1,2)  
\end{cases}
\end{equation}
and $\chi ( \F_i) (q)=0$. 
Since the two one-dimensional faces 
$\rho_{i,1}, \rho_{i,2}$ of $\sigma_i$ 
are not contained in $\RR_+^3$ there 
exists also an isomorphism 
\begin{equation}\label{Erq2}
\F_i|_{X_{\Sigma_i} \setminus T_{\sigma_i}} 
\simeq 
( \iota_i)_! \CC_T |_{X_{\Sigma_i} 
\setminus T_{\sigma_i}}
= \CC_T |_{X_{\Sigma_i} 
\setminus T_{\sigma_i}}.  
\end{equation}
It follows from \eqref{Erq1} and 
\eqref{Erq2} we have an equality 
\begin{equation}
\chi \{ \varphi_{h \circ f_i} 
( \F_i )_{p_{i,j}} \} = 
\chi \{ \varphi_{h \circ f_i} 
( ( \iota_i)_! \CC_T )_{p_{i,j}} \} 
= \chi \{ \varphi_{h \circ f_i} 
( \CC_T )_{p_{i,j}} \}
\end{equation}
for any $1 \leq j \leq n_i$. 
Then for any $1 \leq j \leq n_i$ we obtain 
the positivity 
\begin{equation}
\chi \{ \varphi_{h \circ f_i} 
( \F_i )_{p_{i,j}} \} = 
\chi \{ \varphi_{h \circ f_i} 
( \CC_T )_{p_{i,j}} \} >0
\end{equation}
by the proof of Theorem \ref{SMT-1}. 
Next we consider the case where 
$\dim \sigma_i =2$ and 
$\sigma_i \cap \RR_+^3$ is one of 
the two one-dimensional faces 
$\rho_{i,1}, \rho_{i,2}$ of $\sigma_i$. 
We may assume 
that $\sigma_i \cap \RR_+^3=
\rho_{i,1}$. For $1 \leq j \leq 2$ we 
denote by $T_{i,j} \simeq ( \CC^*)^{2}$ 
the $T$-orbit in $X_{\Sigma_i}$ 
associated to $\rho_{i,j} \prec \sigma_i$. 
Then for $Y_{ \{ 2 \} }= 
\overline{T_{i,2}}$ we have an isomorphism 
$\F_i \simeq \CC_{X_{\Sigma_i} \setminus 
Y_{ \{ 2 \} } }$. Since 
$\CC_{X_{\Sigma_i}}$ is a perverse 
sheaf (up to some shift) 
and the two-dimensional 
variety $Y_{ \{ 2 \} }= 
\overline{T_{i,2}}$ is smooth, for any 
$1 \leq j \leq n_i$ we obtain the 
positivity 
\begin{equation}
\chi \{ \varphi_{h \circ f_i} 
( \F_i )_{p_{i,j}} \} = 
\chi \{ \varphi_{h \circ f_i} 
( \CC_{X_{\Sigma_i}} )_{p_{i,j}} \} 
- \chi \{ \varphi_{h \circ f_i} 
( \CC_{ Y_{ \{ 2 \} } } )_{p_{i,j}} \}
\geq - \chi \{ \varphi_{h \circ f_i} 
( \CC_{ Y_{ \{ 2 \} } } )_{p_{i,j}} \}
 >0. 
\end{equation}
Finally, let us treat the case where 
$\dim \sigma_i=  
\dim \sigma_i \cap \RR_+^3=2$.
Since the face $\gamma_i$ is atypical,  
its dual cone $\sigma_i$ is not contained in 
$\RR^3_+$ and hence we have 
$\sigma_i \cap \RR_+^3 \not= 
\sigma_i$ in this case. Assume 
also that $\relint ( \sigma_i \cap \RR_+^3) 
\subset \relint ( \sigma_i)$. Then 
for any point $q \in T_{\sigma_i} 
\subset X_{\Sigma_i}$ its fiber of the map 
\begin{equation}
\pi_i|_{\CC^3 \cap X_{\Sigma_i^{\prime}}} : 
\CC^3 \cap X_{\Sigma_i^{\prime}} 
\longrightarrow X_{\Sigma_i}
\end{equation}
is isomorphic to the singular algebraic curve 
$\{ (x_1,x_2) \in \CC^2 \ | \ 
x_1x_2=0 \} \subset \CC^2$. By 
calculating its Euler characteristic with 
compact support, we obtain  
$\chi ( \F_i) (q)=1$. Moreover we  
have the isomorphism \eqref{Erq2} 
in this case.   
We thus obtain the positivity 
\begin{equation}
\chi \{ \varphi_{h \circ f_i} 
( \F_i )_{p_{i,j}} \} = 
\chi \{ \varphi_{h \circ f_i} 
( \CC_T )_{p_{i,j}} \} 
+ \chi \{ \varphi_{h \circ f_i} 
( \CC_{T_{\sigma_i}} )_{p_{i,j}} \} >0 
\end{equation}
for any $1 \leq j \leq n_i$.
Similarly we can 
prove the non-negativity and the 
positivity also in the 
remaining case.
This completes the proof. 
\end{proof} 

We thus confirm the 
conjecture of \cite{N-Z} 
for $n=3$ in the generic 
case. Similarly, we can improve 
Theorem \ref{SMT-1} as follows. 
In fact, Theorem \ref{SMT-3} below 
extends Theorems \ref{SMT-1} and 
\ref{SMT-2} in a unified manner. Note that 
the condition $\relint ( \gamma_i) 
\subset \Int ( \RR_+^n)$ is equivalent to 
the one $\sigma_i \cap \RR_+^n = \{ 0 \}$ 
for the cone $\sigma_i= \sigma ( \gamma_i) 
\in \Sigma_0$. 

\begin{theorem}\label{SMT-3} 
Assume that $f$ has isolated singularities 
at infinity over $b \in 
K_f \setminus 
[f( {\rm Sing} f) \cup \{ f(0) \} ]$ 
and for any $1 \leq i \leq m$ 
such that $b \in K_i$ 
the set $\sigma_i \cap \RR_+^n$ is a face 
of $\RR_+^n$ of 
dimension $\leq 2$. 
Assume also that there exists $1 \leq i \leq m$ 
such that $b \in K_i$, 
$\gamma_i \prec \Gamma_{\infty}(f)$ 
is relatively simple and moreover in the case 
$\dim \sigma_i \cap \RR_+^n =2$ the number of 
the common edges of $\sigma_i \cap \RR_+^n$ and 
$\sigma_i$ is $\leq 1$. Then we have $E_f(b)>0$ 
and hence $b \in B_f$. 
\end{theorem}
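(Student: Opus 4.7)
The plan is to follow the reduction in the proof of Theorem \ref{SMT-1} verbatim to rewrite
\[
E_f(b) = (-1)^{n-1} \sum_{i,\,j} \chi\{\varphi_{h \circ f_i}(\F_i)_{p_{i,j}}\}
\]
(summed over $i$ with $b \in K_i$ and $1 \leq j \leq n_i$), and then to show each summand is non-negative with at least one strictly positive. In Theorem \ref{SMT-1}, the hypothesis $\sigma_i \cap \RR_+^n = \{0\}$ made $\F_i \simeq (\iota_i)_! \CC_T$ a shifted perverse sheaf, so that \cite[Proposition 6.1.1]{Dimca} combined with isolated singularities at infinity immediately forced the vanishing cycle at $p_{i,j}$ to be concentrated in a single degree. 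Under the weaker hypothesis of Theorem \ref{SMT-3}, $\F_i$ receives additional contributions from the orbits $T_\tau \subset \CC^n \cap X_{\Sigma_i^{\prime}}$ with $\{0\} \neq \tau \subset \sigma_i \cap \RR_+^n$, so it is no longer a shift of a single perverse sheaf and this direct argument breaks.

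The strategy is a case analysis on the dimension and position of $\sigma_i \cap \RR_+^n$ inside $\sigma_i$, mirroring and extending the argument of Theorem \ref{SMT-2} for $n=3$. Under the hypothesis that $\sigma_i \cap \RR_+^n$ is a face of $\RR_+^n$ of dimension $\leq 2$, I would first describe $\F_i$ explicitly by computing the fibers of $\pi_i$ over each $T$-orbit of $X_{\Sigma_i}$ using proper base change. In each configuration $\F_i$ fits into short distinguished triangles whose remaining terms are $(\iota_i)_! \CC_T$ and shifted constant sheaves $\CC_{Y_\tau}$ on closed toric subvarieties $Y_\tau = \overline{T_\tau} \subset X_{\Sigma_i}$: for instance, when $\sigma_i \cap \RR_+^n = \rho_{i,1}$ is a face of $\sigma_i$ sharing one edge (cf.\ Theorem \ref{SMT-2}), one finds $\F_i \simeq \CC_{X_{\Sigma_i} \setminus Y_{\rho_{i,2}}}$ and the natural triangle reads $\F_i \to \CC_{X_{\Sigma_i}} \to \CC_{Y_{\rho_{i,2}}}$.

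Applying $\varphi_{h \circ f_i}$ to each such triangle and taking stalks at $p_{i,j}$, additivity of Euler characteristics reduces the problem to bounding the contribution from each piece. For the distinguished index $i$ (where $\gamma_i$ is relatively simple), Lemma \ref{PL} ensures perversity of $\CC_{X_{\Sigma_i}}$ and of the sub-pieces $\CC_{Y_\tau}$ up to shift; together with the isolated singularities at infinity hypothesis and \cite[Proposition 6.1.1]{Dimca}, each piece is concentrated in a single degree with controlled sign. The ``open-orbit'' contribution from $(\iota_i)_! \CC_T$ is $(-1)^{n-1}$ times a non-negative integer that is moreover strictly positive, via $\mu_{i,j,\sigma_i} > 0$ from smoothness of $T_{\sigma_i}$ exactly as in Theorem \ref{SMT-1}, while the $\CC_{Y_\tau}$ corrections are either zero or contribute with the correct sign. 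For the non-distinguished indices the low-dimensional restriction on $\sigma_i \cap \RR_+^n$ keeps the relevant toric pieces simple enough that the same perversity argument yields non-negativity of $\mu_{i,j}$.

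The main obstacle is the two-dimensional subcase $\dim(\sigma_i \cap \RR_+^n) = 2$, where several $T$-orbits of $X_{\Sigma_i^{\prime}}$ lying in $\CC^n$ contribute and the stalks of $\F_i$ along $T_{\sigma_i}$ involve cohomology of punctured planes $(\CC^*)^k$. The restriction that $\sigma_i$ and $\sigma_i \cap \RR_+^n$ share at most one common edge is precisely what keeps $\F_i$ expressible as a telescoping triangle of shifted perverse sheaves, paralleling Theorem \ref{SMT-2}'s final case (where the observation $\chi(\F_i)(q) = 1$ produced an additional $\chi\{\varphi_{h \circ f_i}(\CC_{T_{\sigma_i}})_{p_{i,j}}\}$ term of the right sign). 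A careful but routine case-by-case accounting then assembles the non-negative contributions, together with the strictly positive contribution from the distinguished index, into the desired inequality $E_f(b) > 0$, whence $b \in B_f$.
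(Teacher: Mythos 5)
Your proposal follows essentially the same route as the paper's proof: the reduction from Theorem \ref{SMT-1} to the local quantities $(-1)^{n-1}\chi\{\varphi_{h\circ f_i}(\F_i)_{p_{i,j}}\}$, an orbitwise determination of $\F_i$ via the fibers of $\pi_i$, a case analysis according to how $\sigma_i\cap\RR_+^n$ sits inside $\sigma_i$, and perversity (Lemma \ref{PL}, or extension by zero from an affine open) combined with the discreteness of the vanishing-cycle support to force one-degree concentration, with strict positivity extracted from a smooth stratum. One technical remark: the paper does this bookkeeping purely at the level of constructible functions, computing $e(\tau)=\chi(\F_i)$ on each orbit ($1$ if $\dim(\tau\cap\RR_+^n)=\dim\tau$, $0$ otherwise) and using that only $\chi(\F_i)$ enters the Euler characteristics of vanishing cycles (equality \eqref{Erq3}); genuine distinguished triangles with constant-sheaf terms, as you propose, need not exist when the fibers of $\pi_i$ are, say, $\CC^*$ or $\{x_1x_2=0\}$, so you should (and implicitly do) retreat to Euler characteristics.

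The one place where your accounting, taken literally, would fail is the subcase in which $\sigma_i\cap\RR_+^n$ is a common edge $\rho$ of $\sigma_i$ (the delicate case the ``at most one common edge'' hypothesis is designed for). There the constructible function of $\F_i$ equals $1$ on $T\cup T_\rho$ and $0$ elsewhere, and your claim that the positive term is the ``open-orbit contribution from $(\iota_i)_!\CC_T$'' with corrections ``of the correct sign'' is not tenable: the extension by zero from the locally closed orbit $T_\rho$ contributes \emph{non-positively} after the concentration argument, because its expansion over orbit closures carries the factor $(-1)^{\dim\tau-1}$ rather than $(-1)^{\dim\tau}$, so splitting off $(\iota_i)_!\CC_T$ does not give a sum of non-negative terms. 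The paper instead identifies $\F_i\simeq\CC_{X_{\Sigma_i}\setminus Z_i}$ with $Z_i$ the union of the invariant divisors attached to the edges of $\sigma_i$ other than $\rho$, obtains non-negativity from the perversity of this extension from an affine open subset, and gets strict positivity from the term $\chi\{\varphi_{h\circ f_{\tau}}(\CC_{\overline{T_\tau}})_{p_{i,j}}\}$ attached to a facet $\tau\prec\sigma_i$ with $\rho\not\prec\tau$, whose closure $\overline{T_\tau}$ is smooth, in the alternating sum over faces not containing $\rho$. Your triangle $\F_i\to\CC_{X_{\Sigma_i}}\to\CC_{Y}$ is the correct closed-side decomposition when $\sigma_i$ is two-dimensional (this is the Theorem \ref{SMT-2} situation); in higher dimensions you must use its analogue, the inclusion--exclusion over the faces not containing $\rho$, rather than the open-orbit splitting. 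With that correction your plan coincides with the paper's argument, which is itself only sketched in the remaining configurations.
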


\begin{proof}
The proof is similar to those of Theorems 
\ref{SMT-1} and \ref{SMT-2}.  
We shall use the notations 
in them. In the proof of Theorem \ref{SMT-1} 
we proved for $1 \leq i \leq m$ 
such that $\sigma_i \cap \RR_+^n = \{ 0 \}$ 
(resp. $\sigma_i \cap \RR_+^n = \{ 0 \}$ 
and $\gamma_i$ is relatively simple) we have 
$(-1)^{n-1} \chi \{ \varphi_{h \circ f_i} 
( \F_i )_{p_{i,j}} \} \geq 0$ 
(resp. $>0$) for any 
$1 \leq j \leq n_i$. Let us consider the 
remaining cases where $1 \leq \dim 
\sigma_i \cap \RR_+^n \leq 2$. For a 
face $\tau \prec \sigma_i$ of such 
$\sigma_i$, by taking a reference point 
$q \in T_{\tau} \subset X_{\Sigma_i}$ 
of the $T$-orbit $T_{\tau}$ associated to 
it we set $e( \tau )=\chi ( \F_i )(q)$. 
Then as in the proof of Theorem \ref{SMT-2} 
we can easily show that  
\begin{equation}
e ( \tau )= \begin{cases}
1 & ( \dim \tau \cap \RR_+^n= \dim \tau ), 
\\
\ 0 & ( \dim \tau \cap \RR_+^n< \dim \tau ).  
\end{cases}
\end{equation}
In particular, for the zero-dimensional 
face $\{ 0 \} \prec \sigma_i$ of 
$\sigma_i$ we have $T_{\{ 0 \}}=T$, 
$\F_i|_T \simeq \CC_T$ and $e( \{ 0 \} )
=1$. We thus obtain an equality 
\begin{equation}\label{Erq3} 
(-1)^{n-1} \chi \{ \varphi_{h \circ f_i} 
( \F_i )_{p_{i,j}} \} = (-1)^{n-1}  
\sum_{\tau : e( \tau )=1} 
\chi \{ \varphi_{h \circ f_i} 
( \CC_{T_{\tau}} )_{p_{i,j}} \} 
\end{equation}
for any $1 \leq j \leq n_i$. 
First let us consider the case where 
$\dim \sigma_i \cap \RR_+^n =1$. 
If $\sigma_i \cap \RR_+^n$ is not an edge 
of the cone $\sigma_i$, by \eqref{Erq3} 
we have 
\begin{equation} 
(-1)^{n-1} \chi \{ \varphi_{h \circ f_i} 
( \F_i )_{p_{i,j}} \} = (-1)^{n-1}  
\chi \{ \varphi_{h \circ f_i} 
( \CC_{T} )_{p_{i,j}} \} 
\end{equation}
for any $1 \leq j \leq n_i$.
By the proof of Theorem \ref{SMT-1} this 
integer is non-negative. Moreover it 
is positive if $\gamma_i$ is relatively simple. 
Let $\rho_{i,1}, \rho_{i,2}, \ldots, 
\rho_{i,d_i} \prec \sigma_i$ 
be the edges of $\sigma_i$. For $1 \leq j \leq d_i$ we 
denote by $T_{i,j} \simeq ( \CC^*)^{n-1}$ 
the $T$-orbit in $X_{\Sigma_i}$ 
associated to $\rho_{i,j} \prec \sigma_i$. 
If $\sigma_i \cap \RR_+^n$ is an edge 
$\rho$ of $\sigma_i$, by \eqref{Erq3} 
we can easily see that for the remaining 
edges $\rho_{i,j}$ $(1 \leq j \leq d_i)$ 
of $\sigma_i$ satisfying $\rho_{i,j} 
\not= \rho$ and the hypersurface 
$Z_i:= \cup_{j: \rho_{i,j} 
\not= \rho} \overline{T_{i,j}} 
\subset X_{\Sigma_i}$ defined by them 
there exists an isomorphism $\F_i \simeq 
\CC_{X_{\Sigma_i} \setminus Z_i}$. 
Since the hypersurface complement 
$X_{\Sigma_i} \setminus Z_i$ is an 
affine open subset of $X_{\Sigma_i}$,
$\F_i$ is perverse (up to some shift) 
and we obtain the non-negativity   
\begin{equation} 
(-1)^{n-1} \chi \{ \varphi_{h \circ f_i} 
( \F_i )_{p_{i,j}} \} = (-1)^{n-1}  
\chi \{ \varphi_{h \circ f_i} 
( \CC_{X_{\Sigma_i} \setminus Z_i}
)_{p_{i,j}} \} \geq 0
\end{equation}
for any $1 \leq j \leq n_i$.
Moreover we can rewrite this integer 
as follows:
\begin{equation} 
(-1)^{n-1} \chi \{ \varphi_{h \circ f_i} 
( \F_i )_{p_{i,j}} \} = (-1)^{n-1}  
\sum_{\tau : \rho \not\prec \tau} 
(-1)^{\dim \tau}  
\chi \{ \varphi_{h \circ f_i} 
( \CC_{ \overline{T_{\tau}}} 
)_{p_{i,j}} \}.  
\end{equation}
If $\gamma_i$ is relatively simple, the right 
hand side is a sum of non-negative 
integers and for a facet $\tau$ of 
$\sigma_i$ such that $\rho \not\prec \tau$ 
the closure $\overline{T_{\tau}}$ of 
$T_{\tau}$ is smooth and we have the 
positivity 
\begin{equation} 
(-1)^{n-1+ \dim \tau}  
\chi \{ \varphi_{h \circ f_i} 
( \CC_{ \overline{T_{\tau}}} 
)_{p_{i,j}} \} >0.  
\end{equation}
Finally let us consider the case where 
$\dim \sigma_i \cap \RR_+^n =2$.
Assume that $ ( \sigma_i \cap \RR_+^n) 
\setminus \{ 0 \} 
\subset \relint ( \sigma_i)$. Since the 
case where $\dim \sigma_i = 
\dim \sigma_i \cap \RR_+^n=2$ was already 
treated in the proof of Theorem \ref{SMT-2}, 
here we treat only the case where 
$\dim \sigma_i > 
\dim \sigma_i \cap \RR_+^n=2$.  
Then by \eqref{Erq3} 
we obtain the non-negativity 
\begin{equation} 
(-1)^{n-1} \chi \{ \varphi_{h \circ f_i} 
( \F_i )_{p_{i,j}} \} = (-1)^{n-1}  
\chi \{ \varphi_{h \circ f_i} 
( \CC_{T} )_{p_{i,j}} \} \geq 0 
\end{equation}
for any $1 \leq j \leq n_i$.
Moreover it is positive if 
$\gamma_i$ is relatively simple. 
Similarly we can prove 
the non-negativity and the positivity 
also in the remaining cases. We omit 
the details. This completes the proof. 
\end{proof}

In the case $n=4$ we can also 
partially verify the 
conjecture of \cite{N-Z} as follows. 

\begin{theorem}\label{SMT-4} 
Assume that $n=4$,
$f$ has isolated singularities 
at infinity over $b \in K_f \setminus 
[f( {\rm Sing} f) \cup \{ f(0) \} ]$ 
and for any $1 \leq i \leq m$ such that 
$b \in K_i$ and $\dim \sigma_i= \dim 
\sigma_i \cap \RR^4_+=3$ there exists no 
common edge of $\sigma_i$ and 
$\sigma_i \cap \RR^4_+$. Assume also that 
there exists $1 \leq i \leq m$ such that 
$b \in K_i$ and in the case $\dim \sigma_i=
3$ and  $\dim \sigma_i \cap \RR^4_+=2$ 
the number of the common edges of $\sigma_i$ and 
$\sigma_i \cap \RR^4_+$ is $\leq 1$. 
Then we have $E_f(b)>0$ and hence $b \in B_f$. 
\end{theorem}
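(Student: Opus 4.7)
The plan is to follow the framework developed in the proofs of Theorems \ref{SMT-1}--\ref{SMT-3}: reduce $E_f(b)$ via Proposition \ref{PDI} applied to $\pi_i : X_{\Sigma_i^{\prime}} \longrightarrow X_{\Sigma_i}$ to a finite sum of local vanishing cycle contributions
\[
E_f(b) = (-1)^{n-1} \sum_{i:\, b \in K_i} \sum_{j=1}^{n_i} \chi\{\varphi_{h \circ f_i}(\F_i)_{p_{i,j}}\},
\]
where $\F_i = R(\pi_i)_! \CC_{\CC^n \cap X_{\Sigma_i^{\prime}}} \in \Dbc(X_{\Sigma_i})$, and then show that each contribution is non-negative, with strict positivity for the distinguished index $i$ given in the hypothesis.

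Since $n = 4$, every cone $\sigma_i$ satisfies $\dim \sigma_i = 4 - \dim \gamma_i \leq 3$, so by Lemma \ref{PL} every atypical face is automatically relatively simple and the constant sheaf on any orbit closure $\overline{T_{\tau}} \subset X_{\Sigma_i}$ is perverse (up to some shift). I would then partition the indices $i$ with $b \in K_i$ according to the pair $(\dim \sigma_i, \dim \sigma_i \cap \RR_+^4)$. For $\dim \sigma_i \leq 2$, or $\dim \sigma_i = 3$ with $\dim \sigma_i \cap \RR_+^4 \leq 1$, Theorems \ref{SMT-1} and \ref{SMT-3} apply verbatim. The genuinely new cases are $\dim \sigma_i = 3$ with $\dim \sigma_i \cap \RR_+^4 = 2$ (in which the hypothesis forces $\leq 1$ common edge of $\sigma_i$ and $\sigma_i \cap \RR_+^4$) and $\dim \sigma_i = 3$ with $\dim \sigma_i \cap \RR_+^4 = 3$ (in which the hypothesis forces no common edge).

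In these new cases, I would first compute $\chi(\F_i)$ as a constructible function on $X_{\Sigma_i}$ by examining the fibers of $\pi_i|_{\CC^n \cap X_{\Sigma_i^{\prime}}}$ on each $T$-orbit, exactly as in the proof of Theorem \ref{SMT-2}: on $T_{\tau}$ one obtains $\chi(\F_i)(q) = e(\tau)$, with $e(\tau)=1$ if $\tau \subset \RR_+^4$ and $0$ otherwise. Using the additivity of the vanishing cycle functor on constructible functions, I would rewrite $\chi(\F_i)$ by inclusion-exclusion on the face lattice of $\sigma_i$ as an alternating sum of indicator functions $\mathbf{1}_{\overline{T_{\tau^{\prime}}}}$ over an explicit family of faces $\tau^{\prime} \prec \sigma_i$ determined by the geometry of $\sigma_i \cap \RR_+^4$. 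The restrictive assumptions on common edges are precisely what keep this family small and of controlled shape. Each $\CC_{\overline{T_{\tau^{\prime}}}}$ is perverse (up to shift) by Lemma \ref{PL}, and by the isolated-singularities-at-infinity hypothesis the support of $\varphi_{h \circ f_i}(\CC_{\overline{T_{\tau^{\prime}}}})$ is discrete near $T_{\sigma_i}$, so by \cite[Proposition 6.1.1]{Dimca} its cohomology concentrates in degree $n - \dim \tau^{\prime} - 1$ and contributes a non-negative Milnor-type integer with the correct overall sign.

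For strict positivity at the distinguished index $i$, I would isolate the top-dimensional contribution $\tau^{\prime} = \sigma_i$: the variety $\overline{T_{\sigma_i}} = T_{\sigma_i} \simeq T_i$ is smooth (it is an algebraic torus), and the local Milnor number of $f_{\gamma_i}$ at the isolated singular point $p_{i,j}$ of $f_{\gamma_i}^{-1}(b) \subset T_i$ is strictly positive. The main obstacle will be the sign bookkeeping in the case $\dim \sigma_i \cap \RR_+^4 = 3$ with no common edge, where $\sigma_i \cap \RR_+^4$ is a full-dimensional subcone of $\sigma_i$ that fails to be a face of $\sigma_i$; here the inclusion-exclusion is the most delicate, and the no-common-edge hypothesis is exactly what guarantees that the various non-zero terms contribute with a consistent sign, preventing cancellations that would destroy non-negativity. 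Once these combinatorial sub-cases are handled, the analytic conclusion follows as in Theorems \ref{SMT-1}--\ref{SMT-3}.
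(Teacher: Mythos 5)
Your overall framework is indeed the one the paper has in mind: the paper itself omits the proof of Theorem \ref{SMT-4}, saying only that it is ``similar to those of Theorems \ref{SMT-1}, \ref{SMT-2} and \ref{SMT-3}'', and your reduction via Proposition \ref{PDI} to the local quantities $\chi \{ \varphi_{h \circ f_i}(\F_i)_{p_{i,j}} \}$, the orbit-wise computation of $\chi (\F_i)$, the perversity input from Lemma \ref{PL} (available for every $i$ since $\dim \sigma_i \leq 3$ when $n=4$), the concentration from \cite[Proposition 6.1.1]{Dimca}, and the strict positivity extracted from the smooth minimal orbit $T_{\sigma_i}$ are exactly that machinery. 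However, there is a genuine gap precisely where Theorem \ref{SMT-4} goes beyond the earlier theorems. Your value of the constructible function is wrong: you set $e(\tau)=1$ if and only if $\tau \subset \RR_+^4$, whereas the fibers of $\pi_i|_{\CC^n \cap X_{\Sigma_i^{\prime}}}$ are governed by the coordinate faces of $\RR_+^4$ contained in $\sigma_i$, and by the paper's own computations (the fiber $\{ x_1x_2=0 \}$ with $\chi_c=1$ over $T_{\sigma_i}$ in the proof of Theorem \ref{SMT-2}, and the formula preceding \eqref{Erq3}) one has $e(\tau)=1$ exactly when $\dim (\tau \cap \RR_+^n) = \dim \tau$ in the situations treated there --- in particular $e(\sigma_i)$ can equal $1$ even though atypicality forces $\sigma_i \not\subset \RR_+^4$. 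With your criterion every nonzero face of $\sigma_i$ would drop out in the new case $\dim \sigma_i = \dim \sigma_i \cap \RR_+^4 = 3$ with no common edge, and the case would trivialize to $\varphi_{h \circ f_i}(\CC_T)$; in reality the orbits $T_{\tau}$ with $\tau \not\subset \RR_+^4$ but $\tau \cap \RR_+^4$ full-dimensional in $\tau$ do contribute, and since in this case $\sigma_i \cap \RR_+^4$ is not a face of $\RR_+^4$, the formula from the proof of Theorem \ref{SMT-3} cannot be quoted either: one must recompute the fibers case by case (as is done very explicitly in the proof of Theorem \ref{SMT-2}) and then verify that the resulting expression for $(-1)^{n-1}\chi \{ \varphi_{h \circ f_i}(\F_i)_{p_{i,j}} \}$ is a sum of terms of one sign. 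You explicitly defer this (``the main obstacle will be the sign bookkeeping''), so the new content of the theorem --- the role of the common-edge hypotheses --- is asserted rather than proved.

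A second, related problem is a misreading of the hypotheses: the bound of at most one common edge in the configuration $\dim \sigma_i = 3$, $\dim \sigma_i \cap \RR_+^4 = 2$ is imposed only on the distinguished index used for strict positivity, not on every such $i$. Consequently non-negativity has to be established for \emph{all} indices with $b \in K_i$, including $(3,2)$ configurations with two or three common edges and configurations in which $\sigma_i \cap \RR_+^4$ is not a coordinate face of $\RR_+^4$; for the latter the hypotheses of Theorem \ref{SMT-3} fail, so its proof does not apply ``verbatim'' as you claim, and these are exactly the indices your case division skips. To complete the argument you would need: (i) the correct orbit-by-orbit computation of $\chi (\F_i)$ via the coordinate faces of $\RR_+^4$ contained in $\sigma_i$, (ii) a decomposition of $\chi(\F_i)$ into constant sheaves on orbit closures (each perverse up to shift by Lemma \ref{PL}) whose vanishing-cycle contributions all carry the sign $(-1)^{n-1}$, for every admissible configuration, and (iii) the identification of a strictly positive term (the smooth stratum $T_{\sigma_i}$, or a smooth facet closure as in the proof of Theorem \ref{SMT-3}) for the distinguished index.
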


\begin{corollary}\label{SMC-5} 
Assume that $n=4$,
$f$ has isolated singularities 
at infinity over $b \in K_f \setminus 
[f( {\rm Sing} f) \cup \{ f(0) \} ]$ 
and for any $1 \leq i \leq m$ such that 
$b \in K_i$ we have $\dim 
\sigma_i \cap \RR^4_+ \leq 1$ or 
$\dim \sigma_i \leq 2$. 
Then we have $E_f(b)>0$ and hence $b \in B_f$. 
\end{corollary}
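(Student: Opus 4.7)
The plan is to deduce the corollary directly from Theorem \ref{SMT-4} by observing that the dichotomy ``$\dim(\sigma_i\cap\RR^4_+)\leq 1$ or $\dim\sigma_i\leq 2$'' forbids precisely the two ``bad'' combinations of dimensions that appear in the hypotheses of that theorem. Since the corollary is explicitly packaged as a consequence, the substantive work is already in Theorem \ref{SMT-4}, and what remains is a case check.

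First I would note that, since $b\in K_f\setminus[f({\rm Sing}f)\cup\{f(0)\}]\subset\bigcup_{i=1}^{m}K_i$, there exists at least one index $i_0$ with $b\in K_{i_0}$; this will also provide a witness for the existence clause of Theorem \ref{SMT-4}.

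Next I would verify the universally-quantified hypothesis of Theorem \ref{SMT-4}, namely that every $i$ with $b\in K_i$ and $\dim\sigma_i=\dim(\sigma_i\cap\RR^4_+)=3$ has $\sigma_i$ and $\sigma_i\cap\RR^4_+$ sharing no common edge. Under the corollary's assumption no such index $i$ can exist: the equality $\dim(\sigma_i\cap\RR^4_+)=3$ rules out the first alternative $\dim(\sigma_i\cap\RR^4_+)\leq 1$, while $\dim\sigma_i=3$ rules out the second alternative $\dim\sigma_i\leq 2$. Hence this hypothesis is satisfied vacuously.

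Finally, for the existence part of Theorem \ref{SMT-4} I would take the index $i_0$ above. The auxiliary condition on the number of common edges of $\sigma_{i_0}$ and $\sigma_{i_0}\cap\RR^4_+$ is only required in the case $\dim\sigma_{i_0}=3$ and $\dim(\sigma_{i_0}\cap\RR^4_+)=2$, a combination again forbidden by the dichotomy (by the same argument as above). So the condition holds vacuously for $i_0$, both hypotheses of Theorem \ref{SMT-4} are in force, and the conclusion $E_f(b)>0$, and hence $b\in B_f$, follows. There is no serious obstacle here: the role of the corollary is merely to repackage the two conditions of Theorem \ref{SMT-4} into a single transparent dichotomy, and the entire difficulty is already absorbed by Theorem \ref{SMT-4}.
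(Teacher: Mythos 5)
Your derivation is correct and is exactly how the paper intends Corollary \ref{SMC-5} to be obtained: it is presented as an immediate consequence of Theorem \ref{SMT-4}, and your case check that the dichotomy $\dim (\sigma_i\cap\RR^4_+)\le 1$ or $\dim\sigma_i\le 2$ excludes both critical dimension combinations $(\dim\sigma_i,\dim(\sigma_i\cap\RR^4_+))=(3,3)$ and $(3,2)$ — so the universal hypothesis holds vacuously and any index $i_0$ with $b\in K_{i_0}$ (which exists since $b\in\bigcup_{i=1}^m K_i$ by the definition of $K_f$) serves as the witness for the existential one — is precisely the intended argument. No gap.
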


Since the proof of Theorem \ref{SMT-4} 
is similar to those of Theorems 
\ref{SMT-1}, \ref{SMT-2} and \ref{SMT-3}, 
we omit it here.

\end{document}